\documentclass[a4paper]{amsart}

\usepackage[utf8]{inputenc}
\usepackage[english]{babel}

\usepackage{amsmath} 
\usepackage{amssymb} 
\usepackage{amsfonts} 
\usepackage{amsthm} 
\usepackage{cancel} 

\usepackage{enumerate} 
\usepackage{enumitem} 
\usepackage{tikz-cd} 

\usepackage{hyperref}

\theoremstyle{plain}
	\newtheorem{theorem}{Theorem}[section]
	\newtheorem{proposition}[theorem]{Proposition}
	\newtheorem{lemma}[theorem]{Lemma}
	\newtheorem{corollary}[theorem]{Corollary}
\theoremstyle{definition}
	\newtheorem{definition}[theorem]{Definition}
\theoremstyle{remark}
	\newtheorem{remark}[theorem]{Remark}
	\newtheorem{example}[theorem]{Example}

\newcommand{\C}{\mathbb{C}}
\newcommand{\PP}{\mathbb{P}}

\title[Weddle loci and rank of partially symmetric tensors]{Weddle loci of linear systems of quadrics and the rank of partially symmetric tensors}

\author{Luca Chiantini}
\address[L. Chiantini]{Dipartimento di Ingegneria dell'Informazione e Scienze Matematiche\\Università degli Studi di Siena\\Via Roma 56\\53100 Siena, Italy}
\email{luca.chiantini@unisi.it}

\author{Filippo Fagioli}
\address[F. Fagioli]{Dipartimento di Matematica\\Università degli Studi di Roma Tor Vergata\\Via della Ricerca Scientifica 1\\00133 Roma, Italy}
\email{fagioli@mat.uniroma2.it}

\thanks{The authors are supported by the project PRIN2022 ``0-Dimensional Schemes, Tensor Theory, and Applications'' (code 2022E2Z4AK) and are members of GNSAGA (Gruppo Nazionale per le Strutture Algebriche, Geometriche e le loro Applicazioni) of the Istituto Nazionale di Alta Matematica (INdAM). The second author acknowledges the MUR Excellence Department Project MatMod@TOV awarded to the Department of Mathematics, University of Rome Tor Vergata, CUP E83C23000330006}
\keywords{Linear systems of quadrics, tensor rank, Weddle locus, partially symmetric tensors}
\subjclass[2020]{Primary: 14N07; Secondary: 14N05, 15A69}
\date{\today}

\begin{document}

\maketitle

\begin{abstract}
	We establish a connection between properties of partially symmetric tensors (i.e. tensors associated to linear systems of quadric hypersurfaces) and the geometry of some related loci, generalization of the Weddle loci introduced in \cite{CFF+22} for their role in the study of configurations of points and interpolation problems.
	In particular, we consider linear systems of plane conics and linear systems of quadric surfaces, and show that when the associated tensors have low rank, then the singularities of the corresponding Weddle loci satisfy a (sharp) lower bound.
	Thus, we obtain a criterion to exclude that the rank of some partially symmetric tensors is too low.
	
	In the final section, devoted to partially symmetric $n\times n\times n$ tensors which lie in one component $M$ of a standard decomposition of the space of $3$-dimensional tensors (\cite{IR22}), we prove that the number of singular points of the Weddle locus associated to a general tensor in $M$ equals the (recursively defined) $n$-th Jacobsthal number.
\end{abstract}

\section*{Introduction}
	Through the paper we establish a link between multilinear properties of partially symmetric tensors (mainly $3$-dimensional tensors) and geometric properties of linear systems of hypersurfaces (mainly linear systems of quadrics), related with interpolation problems.  The link is based on some geometric properties of the Weddle locus of a linear system. 
	
	The classical notion of Weddle locus concerns linear systems $\mathcal L$ of quadric surfaces in $\PP^3$ passing through $6$ points in general position. The system contains a $2$-dimensional family of singular quadrics; in note of the classical paper \cite{Wed1850} Thomas Weddle correctly stated that the singular points of quadrics in $\mathcal L$ form a surface (now called the {\it Weddle surface} of $\mathcal L$). Many properties of the Weddle surfaces have been studied by A. Emch (see \cite{Emch25}), who also introduced their generalization to loci related with linear systems of quadric hypersurfaces in any $\PP^n$. 
	
	Recently, Weddle loci have been formalized and studied for their relations with special properties of configurations of points in projective spaces, and their projections (see \cite{CFF+22}), which are linked to interpolation problems (see \cite{CM21}). Since hypersurfaces of degree $d$  in $\PP^N$ are naturally associated to $d$-dimensional symmetric tensors, then linear systems are easily associated to $(d+1)$-dimensional, partially symmetric, tensors, by glueing together the tensors associated to any set of generators. In particular, linear systems of quadrics of projective dimension $s$ in $\PP^n$ are easily associated to $3$-dimensional tensors
of type $(s+1)\times (n+1)\times (n+1)$. When $s=n$, we obtain cubic $3$-dimensional tensors. The target of the paper is to determine relations between geometric properties of the Weddle locus of $\mathcal L$ and properties of the associated tensors, \textit{e.g.} the rank.

It is a general non-sense that geometric properties of Weddle loci determine properties of the associated tensors, and vice-versa. From the point of view of tensors, a similar approach (dropping any assumption on partial symmetry for the tensor) can be found also in the classical book of Gelfand, Kapranov, and Zelevinsky (\cite{GKZ94}), where they answer the question about which cubic curves arise from these general tensors. Our target is  to restrict our attention to partially symmetric tensors, and find an answer to similar type of questions.

There are two reasons for which we are led to restrict our attention to partially symmetric tensors. One is the mentioned link with linear systems of hypersurfaces, and thus with their related interpolation properties. There is also the observation that partially symmetric tensors arise naturally in standard decompositions of tensors spaces in supplementary subspaces, including the subspaces of symmetric and skew-symmetric tensors. The remaining summands are composed by special partially symmetric tensors, whose Weddle loci are studied, in the $3$-dimensional case, in the last section of the paper.

The main results that we obtain establish a connection between the singularities of the Weddle locus and the rank of the partially symmetric tensor associated to a linear system of quadrics. These results are focused on tensors of low rank, thus are exhaustive for linear systems of quadrics in low dimensional projective spaces. In the case  of $\PP^2$ we prove that when the $3\times 3\times 3$ tensor associated to a net of conics has rank $3$, then its Weddle locus splits in the union of three lines. When the tensor has rank $4$ then the net is generic. We prove that a general cubic curve in $\PP^2$ is the Weddle locus of a net of conics, extending to the partially symmetric case a result of Gelfand, Kapranov, and Zelevinsky.

We consider then linear systems of projective dimension $3$ of quadrics in $\PP^3$, whose associated tensor $T$ has type $4\times 4 \times 4$. For generic tensors  of this type, the Weddle locus is a quartic surface, which cannot be generic since it comes as the determinant of a matrix of linear forms (see \cite{Bea00}, and \cite{CGer14}).  Again, we prove that when  $T$ has rank $4$, then the Weddle locus of the linear system splits in the union of four planes. Since the generic rank of a tensor of type $4\times 4 \times 4$ is $6$, we pay special attention to systems of quadrics whose associated tensor has rank $5$. We prove that when the partially symmetric tensor $T$ has rank $5$, then the corresponding Weddle locus has $10$ singular points. It follows that when the singularities of the Weddle locus of $T$ have cardinality smaller than $10$ (as in the classical case of the tensor associated to a linear system of quadrics with $6$ general base points), then the rank of $T$ is at least $6$. We find in this sense a method
to exclude that a partially symmetric tensor has rank smaller than the generic value.

In the last section we consider the singularities of the Weddle locus of a partially symmetric tensor $n\times n \times n$ in the summand $N_1$ of the standard decomposition of tensors of dimension $3$ introduced by Itin and Reches in \cite{IR22}. The result shows that the singular locus of a general element of $N_1$ has cardinality equal to a recursively defined value, the $n$-th Jacobstal number $J_n$ (see \cite{Hor96}).

\subsubsection*{Notation}
We will work over the complex field $\mathbb C$, while many results are also valid over any algebraically closed field  of characteristic $0$.

	Let $V$ be a vector space of dimension $n+1$ over $\C$.
	From now on, we assume that a (ordered) basis $\mathbf{e} = \{e_0,\dots,e_n\}$ is fixed on $V$.
	This choice allows us to identify $V$ with its dual space $V^{\vee}$ and, consequently, the $d$-th tensor power $(V^{\vee})^{\otimes d}$ with $V^{\otimes d}$.
	In what follows, we always consider $d$-th order \emph{cubic} tensors, \textit{i.e.}, elements of $V^{\otimes d}$.
	They can always be regarded indistinctly as multilinear $d$-forms on $V$ or on $V^{\vee}$, thanks to the identification given by $\mathbf{e}$.
	In particular, a tensor $T \in V^{\otimes d}$ is written as
	\[
	T = \sum_{0 \le i_1,\dots,i_d \le n} T_{i_1 \dots i_d} \ e_{i_1} \otimes \dots \otimes e_{i_d}
	\]
	with respect to the basis $\mathbf{e}$ of $V$.
	
	Given $x \in V \setminus \{ 0 \}$ of coordinates $\mathbf{x} = (x_0,\dots,x_n)$ with respect to $\mathbf{e}$, we denote by $[x_0:\dots:x_n]$ the homogeneous coordinates relative to $\mathbf{e}$ of the corresponding point $ [x] \in \PP(V)$.
	Finally, if $V = \C^{n+1}$, the projective space $\PP(\C^{n+1})$ is simply denoted by $\PP^n$.

\section{Partially symmetric tensors}\label{sect_semisym_tensors}
	Let $S_d$ denote the symmetric group on $d$ elements.
	A cubic tensor $T$ in $V^{\otimes d}$ is called \emph{symmetric} if it satisfies
	\[
	T_{i_1 \dots i_d} = T_{i_{\sigma(1)} \dots i_{\sigma(d)}}
	\]
	for each multi-index $(i_1,\dots,i_d)$ and each permutation $\sigma \in S_d$.
	The subspace of {symmetric} tensors of $V^{\otimes d}$ is denoted by ${\operatorname{S}}^d V$.
	If $\dim V = n+1$, the dimension of the space of {symmetric} tensors is
	\begin{equation*}
		\dim {\operatorname{S}}^d V = \binom{n+d}{n} = \binom{n+d}{d} .
	\end{equation*}
	Recall that the \emph{symmetrization operator} $\operatorname{S}$ on $V^{\otimes d}$ is defined, in coordinates, as
	\begin{equation*}
		{\operatorname{S}(T)}_{i_1 \dots i_d} := \frac{1}{d!} \sum_{\sigma \in S_d} T_{i_{\sigma(1)} \dots i_{\sigma(d)}} .
	\end{equation*}
	The tensor $\operatorname{S}(T) \in {\operatorname{S}}^d V$ is called the \emph{symmetric part} of $T$.
	\begin{remark}\label{rem_sym_properties}
		Let $\operatorname{Im}\operatorname{S}$ denote the image of the symmetrization operator.
		It is well known that:
		\begin{itemize}
			\item $\operatorname{Im}\operatorname{S} = {\operatorname{S}}^d V$;
			\item $\operatorname{S}(T) = T$ if and only if $T \in {\operatorname{S}}^d V$.
		\end{itemize}
	\end{remark}
	
	A tensor $T \in V^{\otimes d}$ is called \emph{skew-symmetric} if it satisfies
	\[
	T_{i_1 \dots i_d} = \operatorname{sgn}(\sigma) T_{i_{\sigma(1)} \dots i_{\sigma(d)}}
	\]
	for each multi-index and each permutation $\sigma$, where $\operatorname{sgn}(\sigma)$ denotes the sign of $\sigma$.
	Recall that the subspace $\Lambda^d V$ of {skew-symmetric} tensors of $V^{\otimes d}$ has dimension
	\begin{equation*}
		\dim \Lambda^d V = \binom{n+1}{d} .
	\end{equation*}
	The \emph{skew-symmetrization operator} operator $\operatorname{A}$ on $V^{\otimes d}$ is defined as
	\begin{equation*}
		{\operatorname{A}(T)}_{i_1 \dots i_d} := \frac{1}{d!} \sum_{\sigma \in S_d} \operatorname{sgn}(\sigma) T_{i_{\sigma(1)} \dots i_{\sigma(d)}} ,
	\end{equation*}
	and $\operatorname{A}(T) \in \Lambda^d V$ is called the \emph{skew-symmetric part} of $T$.
	\begin{remark}\label{rem_skw_properties}
		Analogously to Remark~\ref{rem_sym_properties}, we recall the following well known facts:
		\begin{itemize}
			\item $\operatorname{Im}\operatorname{A} = \Lambda^d V$;
			\item $\operatorname{A}(T) = T$ if and only if $T \in \Lambda^d V$;
			\item $\operatorname{S}(T) = 0$ if $T \in \Lambda^d V$;
			\item $\operatorname{A}(T) = 0$ if $T \in {\operatorname{S}}^d V$.
		\end{itemize}
	\end{remark}
	
	It is clear that a $2$-nd order {cubic tensor} $T$, \textit{i.e.}, a matrix, can be always written as the sum of its symmetric and skew-symmetric parts as
	\begin{equation*}
		T = \operatorname{S}(T) + \operatorname{A}(T)
	\end{equation*}
	where, in this case,
	\[
	{\operatorname{S}(T)}_{ij} = \frac{1}{2} \big( T_{ij} + T_{ji} \big) \quad \text{and} \quad {\operatorname{A}(T)}_{ij} = \frac{1}{2} \big( T_{ij} - T_{ji} \big) .
	\]
	Accordingly, the space of $2$-nd order cubic tensors decomposes as
	\begin{equation*}
		V^{\otimes 2} = {\operatorname{S}}^2 V \oplus \Lambda^2 V
	\end{equation*}
	and the dimensions of these spaces are distributed as
	\[
	(n+1)^2 = \binom{n+2}{2} + \binom{n+1}{2}
	\]
	respectively.
	
	For dimensional reasons, a decomposition as above does not exist for higher order tensors.
	Specifically, alongside their symmetric and skew-symmetric components, higher order tensors always have a \emph{residual part}.
	In the next section, we focus on cubic tensors of $3$-rd order and we investigate their residual part.
	
\subsection{Decomposition of $3$-rd order cubic tensors}
	Let $T \in V^{\otimes 3}$ be a $3$-rd order tensor, and let $\{T_{ijk}\}_{0 \le i,j,k\le n}$ be its components with respect to the basis $\mathbf{e}$ of $V$ fixed above.
	First, we fix a convention in order to arrange these values in a ``three dimensional array''.
	\begin{example}\label{ex_2x2x2_tensors}
		If $\dim V = 2$, we represent the tensor $T$ by a $2 \times 2 \times 2$ array as follows.
		\begin{center}
			\begin{tikzpicture}[scale=1]
				\draw (0,0) node[scale=1] {$T_{100}$};
				\draw (2,0) node[scale=1] {$T_{110}$};
				\draw (1,1) node[scale=1] {$T_{101}$};
				\draw (3,1) node[scale=1] {$T_{111}$};
				\draw (0,2) node[scale=1] {$T_{000}$};
				\draw (2,2) node[scale=1] {$T_{010}$};
				\draw (1,3) node[scale=1] {$T_{001}$};
				\draw (3,3) node[scale=1] {$T_{011}$};
				
				\draw (0,0.3) -- (0,1.7);
				\draw (0.3,0) -- (1.7,0);
				\draw (2,0.3) -- (2,1.7);
				\draw (0.3,2) -- (1.7,2);
				\draw (1,1.3) -- (1,2.7);
				\draw (1.3,1) -- (2.7,1);
				\draw (3,1.3) -- (3,2.7);
				\draw (1.3,3) -- (2.7,3);
				\draw (0.2,0.2) -- (0.8,0.8);
				\draw (0.2,2.2) -- (0.8,2.8);
				\draw (2.2,0.2) -- (2.8,0.8);
				\draw (2.2,2.2) -- (2.8,2.8);
				
			\end{tikzpicture}
		\end{center}
		This representation adopts the index convention of the following Remark~\ref{rem_convention_indices}.
	\end{example}
	\begin{remark}\label{rem_convention_indices}
		If $\dim V = n+1$, for each $0 \le k \le n$, we define a matrix $T_k$ by setting $(T_k)_{ij} := T_{ijk}$, \textit{i.e.}, the component $T_{ijk}$ of $T$ is the $(i,j)$ entry of $T_k$, where $i$, resp. $j$, is the row, resp. column, index.
		We view the collection of matrices $\{T_k\}_{0\le k\le n}$ as forming a $(n+1) \times (n+1) \times (n+1)$ array, where $T_0$ represents the first (or front) face of this array, $T_1$ the second face parallel to the first, and so on, up to $T_n$, which represents the $(n+1)$-th (or back) face.
		
		With a similar convention, we refer to the indices $i,j$ and $k$ as the up-down, left-right, and front-back directions respectively.
	\end{remark}
	
	The main object of our investigation is the linear subspace of tensors in $V^{\otimes 3}$ defined by the set of linear equations
	\begin{equation*}
		T_{ijk} + T_{jki} + T_{kij} = 0, \quad i,j,k = 0,\dots,n .
	\end{equation*}
	We denote this space by ${\operatorname{N}} V$.
	
	It is useful to describe ${\operatorname{N}} V$ as the image of the following linear map.
	Specifically, we define the operator $\operatorname{N}$ on $V^{\otimes 3}$ by
	\begin{equation*}
		{\operatorname{N}(T)} := T - {\operatorname{S}(T)} - {\operatorname{A}(T)} ,
	\end{equation*}
	see also \cite[Section~4.1.1]{IR22}.
	Since, in coordinates, the symmetric part of $T$ is
	\begin{equation*}
		{\operatorname{S}(T)}_{ijk} = \frac{1}{6} \big( T_{ijk} + T_{jki} + T_{kij} + T_{jik} + T_{kji} + T_{ikj} \big)
	\end{equation*}
	and its skew-symmetric part is
	\begin{equation*}
		{\operatorname{A}(T)}_{ijk} = \frac{1}{6} \big( T_{ijk} + T_{jki} + T_{kij} - T_{jik} - T_{kji} - T_{ikj} \big) ,
	\end{equation*}
	the components of $\operatorname{N}(T)$ are given by
	\begin{equation}\label{eq_residue_part}
		{\operatorname{N}(T)}_{ijk} = {\big( T - \operatorname{S}(T) - \operatorname{A}(T) \big)}_{ijk} = \frac{1}{3} \big( 2T_{ijk} - T_{jki} - T_{kij} \big).
	\end{equation}
	The tensor $\operatorname{N}(T)$ is called the \emph{residual part} of $T$, \textit{cf.} \cite[p.~9]{IR22}.
	
	\begin{proposition}\label{prop_imN=NV}
		The equality $\operatorname{Im}\operatorname{N} = {\operatorname{N}} V$ holds.
		
		Moreover, for $T \in V^{\otimes 3}$, $\operatorname{N}(T) = T$ if and only if $T \in {\operatorname{N}} V$.
	\end{proposition}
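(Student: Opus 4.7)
The plan is to prove both statements via direct coordinate computation, using the explicit formula \eqref{eq_residue_part} for $\operatorname{N}(T)$ together with the coordinate expressions of $\operatorname{S}(T)$ and $\operatorname{A}(T)$ already displayed in the excerpt. In fact, the cleanest route is to first establish the inclusion $\operatorname{Im}\operatorname{N}\subseteq \operatorname{N} V$ and the identity $\operatorname{N}(T)=T$ for $T\in \operatorname{N} V$, and then derive everything else as a formal consequence.

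First I would verify that $\operatorname{N}(T)\in \operatorname{N} V$ for every $T\in V^{\otimes 3}$. By \eqref{eq_residue_part}, the quantity $\operatorname{N}(T)_{ijk}+\operatorname{N}(T)_{jki}+\operatorname{N}(T)_{kij}$ equals
\[
\tfrac{1}{3}\bigl[(2T_{ijk}-T_{jki}-T_{kij})+(2T_{jki}-T_{kij}-T_{ijk})+(2T_{kij}-T_{ijk}-T_{jki})\bigr],
\]
and each of the three cyclic orbits $T_{ijk}$, $T_{jki}$, $T_{kij}$ appears with total coefficient $2-1-1=0$. Hence the defining equations of $\operatorname{N} V$ are satisfied, giving $\operatorname{Im}\operatorname{N}\subseteq \operatorname{N} V$.

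Next I would prove that if $T\in \operatorname{N} V$ then $\operatorname{S}(T)+\operatorname{A}(T)=0$, which by the definition of $\operatorname{N}$ immediately yields $\operatorname{N}(T)=T$. Using the formulas given just before \eqref{eq_residue_part}, the odd permutations cancel in the sum and
\[
\bigl(\operatorname{S}(T)+\operatorname{A}(T)\bigr)_{ijk}=\tfrac{1}{3}\bigl(T_{ijk}+T_{jki}+T_{kij}\bigr),
\]
which vanishes precisely because $T\in \operatorname{N} V$. This proves $\operatorname{N} V\subseteq \operatorname{Im}\operatorname{N}$ (since each $T\in \operatorname{N} V$ is its own $\operatorname{N}$-image), so combined with the first step we obtain $\operatorname{Im}\operatorname{N}=\operatorname{N} V$, and it also gives one direction of the ``moreover'' statement. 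The reverse direction is immediate: if $\operatorname{N}(T)=T$ then $T=\operatorname{N}(T)\in\operatorname{Im}\operatorname{N}=\operatorname{N} V$.

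There is no real obstacle here; the entire argument reduces to the cyclic identity $(2,-1,-1)+(-1,2,-1)+(-1,-1,2)=0$ in the first step and to the cancellation of the transposition terms in $\operatorname{S}(T)+\operatorname{A}(T)$ in the second. The only point worth being careful about is to keep track of the $\tfrac16$ and $\tfrac13$ factors so that the identity $T=\operatorname{S}(T)+\operatorname{A}(T)+\operatorname{N}(T)$ is consistent with the coordinate formulas used.
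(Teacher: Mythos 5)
Your proof is correct and follows essentially the same route as the paper: the cyclic cancellation $(2-1-1)=0$ gives $\operatorname{Im}\operatorname{N}\subseteq \operatorname{N} V$, and the identity $\bigl(\operatorname{S}(T)+\operatorname{A}(T)\bigr)_{ijk}=\tfrac{1}{3}\bigl(T_{ijk}+T_{jki}+T_{kij}\bigr)$ is just the paper's rewriting of Equation~\eqref{eq_residue_part} as $\operatorname{N}(T)_{ijk}=T_{ijk}-\tfrac{1}{3}(T_{ijk}+T_{jki}+T_{kij})$. Nothing is missing; you even state the easy converse of the ``moreover'' part more explicitly than the paper does.
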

	\begin{proof}
		If $T = \operatorname{N}(R)$ for some $R \in V^{\otimes 3}$ then, by Equation~\eqref{eq_residue_part}, it follows that
		\begin{align*}
			3 (T_{ijk} &+ T_{jki} + T_{kij}) \\
			&= \big( 2R_{ijk} - R_{jki} - R_{kij} \big) + \big( 2R_{jki} - R_{kij} - R_{ijk} \big) +\big( 2R_{kij} - R_{ijk} - R_{jki} \big) \\
			&= 0 ,
		\end{align*}
		hence $T \in {\operatorname{N}} V$.
		
		The converse follows immediately by rewriting Equation~\eqref{eq_residue_part} as
		\[
		{\operatorname{N}(T)}_{ijk} = T_{ijk} - \frac{1}{3} \big( T_{ijk} + T_{jki} + T_{kij} \big) .
		\]
		Indeed, if $T \in {\operatorname{N}} V$, we get directly that $\operatorname{N}(T) = T$.
	\end{proof}
	
	\begin{proposition}\label{prop_decomposition_3_order}
		The space of $3$-rd order cubic tensors decomposes as
		\begin{equation*}
			V^{\otimes 3} = {\operatorname{S}}^3 V \oplus {\operatorname{N}} V \oplus \Lambda^3 V .
		\end{equation*}
		Accordingly, the dimensions of these spaces are distributed as
		\begin{equation}\label{eq_3_order_decomp_dim}
			(n+1)^3 = \binom{n+3}{3} + 4 \binom{n+2}{3} + \binom{n+1}{3}
		\end{equation}
		respectively.
	\end{proposition}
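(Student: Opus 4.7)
The plan is to treat the three maps $\operatorname{S}$, $\operatorname{N}$, $\operatorname{A}$ on $V^{\otimes 3}$ as mutually annihilating idempotents that project onto the three proposed summands. By the very definition $\operatorname{N}(T) = T - \operatorname{S}(T) - \operatorname{A}(T)$, every tensor $T$ is the sum of its three parts, and each part sits in the correct subspace by Remark~\ref{rem_sym_properties}, Proposition~\ref{prop_imN=NV}, and Remark~\ref{rem_skw_properties} respectively. Hence $V^{\otimes 3} = {\operatorname{S}}^3 V + {\operatorname{N}} V + \Lambda^3 V$, so only the directness of the sum and the dimension identity remain.

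To establish directness I would first record the idempotence and pairwise annihilation of $\operatorname{S}$ and $\operatorname{A}$. Remarks~\ref{rem_sym_properties} and~\ref{rem_skw_properties} yield $\operatorname{S}^2 = \operatorname{S}$, $\operatorname{A}^2 = \operatorname{A}$, and $\operatorname{S} \circ \operatorname{A} = \operatorname{A} \circ \operatorname{S} = 0$. Writing $\operatorname{N} = \operatorname{Id} - \operatorname{S} - \operatorname{A}$ and composing on either side then gives, purely formally, $\operatorname{S} \circ \operatorname{N} = \operatorname{N} \circ \operatorname{S} = 0$ and $\operatorname{A} \circ \operatorname{N} = \operatorname{N} \circ \operatorname{A} = 0$ (and incidentally $\operatorname{N}^2 = \operatorname{N}$, consistent with Proposition~\ref{prop_imN=NV}). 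Thus $\operatorname{S}$ acts as the identity on ${\operatorname{S}}^3 V$ and kills both ${\operatorname{N}} V$ and $\Lambda^3 V$, and analogously for $\operatorname{A}$ and $\operatorname{N}$ on their own pieces. Applying $\operatorname{S}$ and then $\operatorname{A}$ to a relation $T_s + T_n + T_a = 0$ with $T_s \in {\operatorname{S}}^3 V$, $T_n \in {\operatorname{N}} V$, $T_a \in \Lambda^3 V$ forces successively $T_s = 0$, $T_a = 0$, and finally $T_n = 0$.

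For the dimension formula I would substitute the standard values $\dim {\operatorname{S}}^3 V = \binom{n+3}{3}$ and $\dim \Lambda^3 V = \binom{n+1}{3}$ into the direct sum just proved, obtaining $\dim {\operatorname{N}} V = (n+1)^3 - \binom{n+3}{3} - \binom{n+1}{3}$. A short factorisation, pulling the common factor $(n+1)/6$ out of the two binomial terms, rewrites this as $4\binom{n+2}{3}$, which matches the claimed distribution.

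The whole argument is essentially formal, and I expect no serious obstacle: the proposition reduces to the defining identity $\operatorname{N} = \operatorname{Id} - \operatorname{S} - \operatorname{A}$, the idempotence and mutual annihilation of $\operatorname{S}$ and $\operatorname{A}$ already packaged in Remarks~\ref{rem_sym_properties} and~\ref{rem_skw_properties}, and a one-line binomial manipulation. The most delicate bookkeeping step is the verification that $\operatorname{S} \circ \operatorname{N}$ and $\operatorname{A} \circ \operatorname{N}$ vanish, but this is immediate from $\operatorname{N} = \operatorname{Id} - \operatorname{S} - \operatorname{A}$ and does not require returning to the coordinate formulas.
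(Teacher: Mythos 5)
Your argument is correct and rests on the same ingredients as the paper's proof: the identity $\operatorname{N}=\operatorname{Id}-\operatorname{S}-\operatorname{A}$, the projection properties recorded in Remarks~\ref{rem_sym_properties} and~\ref{rem_skw_properties}, and Proposition~\ref{prop_imN=NV}. The only organizational difference is that you obtain directness purely from the idempotent calculus ($\operatorname{S}\circ\operatorname{N}=\operatorname{N}\circ\operatorname{S}=\operatorname{A}\circ\operatorname{N}=\operatorname{N}\circ\operatorname{A}=0$ applied to a dependence relation), whereas the paper first identifies $\ker\operatorname{N}={\operatorname{S}}^3V\oplus\Lambda^3V$ and then combines $\operatorname{Im}\operatorname{N}\cap\ker\operatorname{N}=\{0\}$ with rank--nullity; both routes are equally valid, and your dimension count reproduces the paper's value $\tfrac{2}{3}n(n+1)(n+2)=4\binom{n+2}{3}$.
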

	\begin{proof}
		It holds that $ \ker\operatorname{N} = {\operatorname{S}}^3 V \oplus \Lambda^3 V$.
		Indeed, if $T \in \ker\operatorname{N}$ then $T = \operatorname{S}(T) + \operatorname{A}(T)$ by definition of $\operatorname{N}$.
		Conversely, if $T = T_1 + T_2$ for some tensors $T_1 \in {\operatorname{S}}^3 V$ and $T_2 \in \Lambda^3 V$, we have
		\begin{equation*}
			\operatorname{N}(T) = T_1 - \operatorname{S}(T_1) - \operatorname{A}(T_1) + T_2 - \operatorname{S}(T_2) - \operatorname{A}(T_2) = 0 ,
		\end{equation*}
		where the last equality follows from Remarks~\ref{rem_sym_properties} and~\ref{rem_skw_properties}.
		
		From this, we compute
		\[
		\dim \operatorname{Im}\operatorname{N} = \dim V^{\otimes 3} - \dim {\operatorname{S}}^3 V - \dim \Lambda^3 V = \frac{2}{3}n(n+1)(n+2) .
		\]
		Since $\operatorname{Im}\operatorname{N} = {\operatorname{N}} V$ by Proposition~\ref{prop_imN=NV}, we get Equation~\eqref{eq_3_order_decomp_dim}.
		
		Finally, if $ T \in \operatorname{Im}\operatorname{N} \cap \ker\operatorname{N} $, then $ T = N(T) = 0 $, where the first equality follows by Proposition~\ref{prop_imN=NV} once more.
		This proves that
		\[
		{\operatorname{N}} V \cap \big({\operatorname{S}}^3 V \oplus \Lambda^3 V\big) = \{ 0 \} ,
		\]
		which establishes the desired decomposition of $V^{\otimes 3}$.
	\end{proof}
	
	\begin{remark}\label{rem_focus_partially_symmetric}
	As we will clarify in Section~\ref{sect_weddle_loci}, our goal is to provide a concrete geometric interpretation of a tensor $T \in {\operatorname{N}} V$.
	Following the index convention of Remark~\ref{rem_convention_indices}, we regard $T$ as a collection of matrices $\{T_k\}_k$, which form the faces of $T$.
	Observe that if $T$ were \emph{partially symmetric with respect to the first two indices}, then each $T_k$ would be symmetric, hence $T$ would represent a \emph{linear system of quadrics}.
	
	\end{remark}
	From now on, we focus our attention on the tensors in ${\operatorname{N}} V$ that are also partially symmetric.
	
	Denote by ${\operatorname{N}_1} V$ the subspace of ${\operatorname{N}} V$ defined by the linear equations
	\begin{enumerate}
		\item $T_{ijk} + T_{jki} + T_{kij} = 0 ,$
		\item $T_{ijk} = T_{jik} .$
	\end{enumerate}
	In a similar fashion, we denote by ${\operatorname{N}_2} V$ the subspace of tensors in ${\operatorname{N}} V$ that are partially symmetric with respect to the first and last indices, \textsl{i.e.}, $T_{ijk} = T_{kji}$.
	
	Now, split the operator $\operatorname{N}$ into the sum of two operators $\operatorname{N}_1$ and $\operatorname{N}_2$ on $V^{\otimes 3}$, which are defined as
	\[
	{\operatorname{N}_1 (T)}_{ijk} := \frac{T_{ijk} + T_{jik} - T_{kji} - T_{kij}}{3}
	\]
	and
	\[
	{\operatorname{N}_2 (T)}_{ijk} := \frac{T_{ijk} - T_{jik} + T_{kji} - T_{jki}}{3}
	\]
	respectively, see \cite[Section~4.1.2]{IR22}.
	
	\begin{proposition}\label{prop_imN1=N1V-imN2=N2V}
		The following properties hold:
		\begin{enumerate}
			\item $\operatorname{Im}\operatorname{N}_1 = {\operatorname{N}_1} V$ and $\operatorname{Im}\operatorname{N}_2 = {\operatorname{N}_2} V$;\label{item:N1_N2_i}
			\item ${\operatorname{N}_1} V \cap {\operatorname{N}_2} V = \{ 0 \}$;\label{item:N1_N2_ii}
			\item ${\operatorname{N}_1} V \subseteq \ker \operatorname{N}_2$ and ${\operatorname{N}_2} V \subseteq \ker \operatorname{N}_1$;\label{item:N1_N2_iii}
		\end{enumerate}
		moreover, for $T \in V^{\otimes 3}$,
		\begin{enumerate}[resume]
			\item $\operatorname{N}_1(T) = T$ if and only if $T \in {\operatorname{N}_1} V$;\label{item:N1_N2_iv}
			\item $\operatorname{N}_2(T) = T$ if and only if $T \in {\operatorname{N}_2} V$.\label{item:N1_N2_v}
		\end{enumerate}
	\end{proposition}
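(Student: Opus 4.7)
The strategy is to verify all five statements by direct index manipulation from the explicit coordinate formulas for $\operatorname{N}_1$ and $\operatorname{N}_2$, mirroring the template used for $\operatorname{N}$ in Propositions~\ref{prop_imN=NV} and~\ref{prop_decomposition_3_order}. The logical order I would follow is: first prove (iv) and (v); then deduce (i) from them together with a short inclusion check; and finally dispose of (ii) and (iii) by independent computations.

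For (iv), the easy direction is to assume $T \in {\operatorname{N}_1}V$ and compute $\operatorname{N}_1(T)_{ijk}$ directly: the partial symmetry $T_{jik}=T_{ijk}$, together with its consequence $T_{kji}=T_{jki}$ (obtained by applying the same symmetry to the triple $(k,j,i)$), reduces $\tfrac{1}{3}(T_{ijk}+T_{jik}-T_{kji}-T_{kij})$ to $\tfrac{1}{3}(2T_{ijk}-T_{jki}-T_{kij})$, and then the cyclic vanishing $T_{jki}+T_{kij}=-T_{ijk}$ collapses this to $T_{ijk}$. For the converse, I would in fact prove the stronger statement $\operatorname{Im}\operatorname{N}_1 \subseteq {\operatorname{N}_1}V$: partial symmetry of $\operatorname{N}_1(R)$ in the first two indices is visible by inspection of the formula, and vanishing of the cyclic sum $\operatorname{N}_1(R)_{ijk}+\operatorname{N}_1(R)_{jki}+\operatorname{N}_1(R)_{kij}$ is a one-line expansion in which each of the six permutations of $R_{ijk}$ appears with coefficients $+1$ and $-1$ that cancel in pairs. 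Statement (v) is the same calculation after replacing $(1\,2)$-symmetry by $(1\,3)$-symmetry throughout. Item (i) is then immediate: $\operatorname{Im}\operatorname{N}_1 \subseteq {\operatorname{N}_1}V$ is exactly what was just shown, and the reverse inclusion follows because any $T \in {\operatorname{N}_1}V$ satisfies $T=\operatorname{N}_1(T)$ by (iv); the same for $\operatorname{N}_2$.

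For (ii), if $T \in {\operatorname{N}_1}V \cap {\operatorname{N}_2}V$ then $T$ is invariant under both transpositions $(1\,2)$ and $(1\,3)$, which generate $S_3$, so $T \in {\operatorname{S}}^3 V$; applying the cyclic relation to a fully symmetric tensor yields $3T_{ijk}=0$, whence $T=0$ (alternatively, invoke Proposition~\ref{prop_decomposition_3_order} to obtain $T \in {\operatorname{S}}^3V \cap {\operatorname{N}}V = \{0\}$). For (iii), substituting $T_{jik}=T_{ijk}$ and $T_{kji}=T_{jki}$ into $\operatorname{N}_2(T)_{ijk}=\tfrac{1}{3}(T_{ijk}-T_{jik}+T_{kji}-T_{jki})$ makes the four terms cancel in two pairs, proving ${\operatorname{N}_1}V \subseteq \ker\operatorname{N}_2$; the symmetric substitution handles ${\operatorname{N}_2}V \subseteq \ker\operatorname{N}_1$.

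No step in this plan is conceptually deep; the whole argument is combinatorial bookkeeping over the six permutations of an index triple. The main place where I expect to slip is in the twelve-term cyclic sum for $\operatorname{N}_1(R)$ and, more subtly, in keeping track of the \emph{induced} symmetries a partial symmetry forces on permuted indices (for instance, that $T_{jik}=T_{ijk}$ also implies $T_{kji}=T_{jki}$). To avoid errors I would lay out the six permutations explicitly and verify each cancellation by inspection rather than by algebraic manipulation.
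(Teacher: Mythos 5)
Your proposal is correct and follows essentially the same approach as the paper: direct index manipulation using the defining symmetries, with item~\eqref{item:N1_N2_i} reduced to the computation $\operatorname{Im}\operatorname{N}_1 \subseteq {\operatorname{N}_1}V$ plus the identity $\operatorname{N}_1(T)=T$ on ${\operatorname{N}_1}V$, exactly as in the template of Proposition~\ref{prop_imN=NV}. The only cosmetic difference is in item~\eqref{item:N1_N2_ii}, where you observe that the two transpositions generate $S_3$ so that $T$ is fully symmetric before invoking the cyclic relation, whereas the paper runs the equivalent chain of equalities $T_{ijk}=-T_{jki}-T_{kij}=-T_{kji}-T_{jik}=-2T_{ijk}$ directly; both yield $3T_{ijk}=0$.
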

	\begin{proof}
		The proof of item~\eqref{item:N1_N2_i} is similar to the proof of Proposition~\ref{prop_imN=NV}, and follows from an easy computation.
		
		To prove item~\eqref{item:N1_N2_ii}, suppose that $T \in {\operatorname{N}_1} V \cap {\operatorname{N}_2} V$.
		This means that $T$ satisfies the cyclic condition $T_{ijk} + T_{jki} + T_{kij} = 0$ and is simultaneously partially symmetric with respect to both the first and second indices, as well as the first and last indices.
		From these properties we derive the following chain of equalities
		\begin{align*}
			T_{ijk} = - T_{jki} - T_{kij} = - T_{kji} - T_{jik} = - T_{ijk} - T_{ijk}
		\end{align*}
		which yields $3 T_{ijk} = 0$ for each $(i,j,k)$ entry, and thus $T=0$.
		
		To show item~\eqref{item:N1_N2_iii}, let $T \in {\operatorname{N}_1} V$.
		Then, we have
		\[
		3{\operatorname{N}_2 (T)}_{ijk} = T_{ijk} - T_{jik} + T_{kji} - T_{jki} = T_{ijk} - T_{ijk} + T_{kji} - T_{kji} = 0
		\]
		where in the second equality we used the partial symmetry of $T$ with respect to the first and second indices.
		The proof of the remaining inclusion is similar.
		
		To conclude, we show item~\eqref{item:N1_N2_iv} (the proof of item~\eqref{item:N1_N2_v} is completely analogous).
		For $T \in {\operatorname{N}_1} V$, we have
		\[
		3{\operatorname{N}_1 (T)}_{ijk} = T_{ijk} + T_{jik} - T_{kji} - T_{kij} = T_{ijk} + T_{ijk} - (T_{jki} + T_{kij}) = 3 T_{ijk} ,
		\]
		hence $\operatorname{N}_1 (T) = T$.
		The converse follows from item~\eqref{item:N1_N2_i}.
	\end{proof}
	
	Propositions~\ref{prop_imN=NV} and~\ref{prop_imN1=N1V-imN2=N2V} allows us to conclude that the following direct sum decomposition
	\begin{equation}\label{eq_splitting_residual_tensors}
		{\operatorname{N}} V = {\operatorname{N}_1} V \oplus {\operatorname{N}_2} V
	\end{equation}
	holds.
	
	\begin{remark}
	The operators $\operatorname{N}_1$ and $\operatorname{N}_2$ arise naturally from representation theory.
	Indeed, they are defined by the two standard Young tableaux associated to the non-increasing partition $(2,1,0)$ of the integer $3$ in parts less or equal than $3$.
	The non uniqueness of the decomposition~\eqref{eq_splitting_residual_tensors} given by the operators $\operatorname{N}_1$ and $\operatorname{N}_2$ is, for instance, due to the fact that the rule we take into account for the explicit expression of these two operators is that the symmetrizers are applied after the skew-symmetrizers.
	Other choices for the rules of the Young symmetrizers associated to $(2,1,0)$ give two different operators that, however, are isotopic to $\operatorname{N}_1$ and $\operatorname{N}_2$.
	For further details see, for instance, \cite[p.~80]{Ful97} and \cite[Section~4.1.2]{IR22}.
	\end{remark}
	
	\begin{remark}
	According to this interpretation (see \cite[Section~7]{Ful97}), recall that the symmetrization operator $\operatorname{S}$ corresponds to the partition $(3,0,0)$, hence the Schur power $\Gamma^{(3,0,0)} V$ is isomorphic to the symmetric power ${\operatorname{S}}^3 V$.
	Analogously, the other extreme partition $(1,1,1)$ is associated to the skew-symmetrization operator $\operatorname{A}$, thus the Schur power $\Gamma^{(1,1,1)} V$ is isomorphic to the exterior power ${\Lambda^3} V$.
	Given that the representation associated to $(2,1,0)$ is two-dimensional, we have two isomorphic representations for the Schur power $\Gamma^{(2,1,0)} V$ which are the spaces ${\operatorname{N}_1} V$ and ${\operatorname{N}_2} V$.
	\end{remark}
	
\subsubsection{Interpretation in terms of bases}
	By Proposition~\ref{prop_decomposition_3_order} and Equation~\eqref{eq_splitting_residual_tensors}, we have decomposed the identity operator on $V^{\otimes 3}$ as
		\begin{equation*}
			{\operatorname{Id}_{V^{\otimes 3}} = \operatorname{S} + (\operatorname{N}_1 + \operatorname{N}_2) + \operatorname{A}}
		\end{equation*}
	and this allows to describe tensors in ${\operatorname{N}} V$ more concretely in terms of bases.
	
	If $ \{ e_0,\dots,e_n \} $ is basis for $V$, then $ \{ e_i \otimes e_j \otimes e_k \}_{i,j,k} $ is the basis induced on $V^{\otimes 3}$.
	First, consider the partition of the set
	\[
	\big\{ (i,j,k) \ \text{s.t.} \ 0 \le i,j,k \le n \big\}
	\]
	given by
	\begin{center}
		$\{j \le i \le k\}$, $\{j > i > k\}$, $\{j \le i > k\}$, $\{j > i \le k\}$.
	\end{center}
	As usual, a basis for the space ${\operatorname{S}}^3 V$ is
	\[
	\big\{{e_i \odot e_j \odot e_k := \operatorname{S}(e_i \otimes e_j \otimes e_k)}\big\}_{j \le i \le k} ,
	\]
	while a basis for $\Lambda^3 V$ is
	\[
	\big\{{e_i \wedge e_j \wedge e_k := \operatorname{A}(e_i \otimes e_j \otimes e_k)}\big\}_{j > i > k} .
	\]
	Finally, in order to find bases for ${\operatorname{N}_1} V$, resp. ${\operatorname{N}_2} V$, we apply the operator $\operatorname{N}_1$, resp. $\operatorname{N}_2$, to the elements $e_i \otimes e_j \otimes e_k$ whose indices satisfy the relation $j \le i > k$, resp. $j > i \le k$.
	More precisely, one can check that
	\begin{equation*}
		\big\{{\operatorname{N}_1(e_i \otimes e_j \otimes e_k)}\big\}_{j \le i > k} \quad \text{and} \quad \big\{{\operatorname{N}_2(e_i \otimes e_j \otimes e_k)}\big\}_{j > i \le k}
	\end{equation*}
	are two sets of linearly independent tensors which, by a dimension count, form bases of ${\operatorname{N}_1} V$ and ${\operatorname{N}_2} V$ respectively.
	
	\begin{example}\label{ex_N1_dimension_2}
		If $\dim V = 2$, then the tensors
		\[
		e_0 \otimes e_0 \otimes e_0,\ e_0 \otimes e_0 \otimes e_1,\ e_1 \otimes e_0 \otimes e_1,\ e_1 \otimes e_1 \otimes e_1
		\]
		are mapped by the operator $S$ to the basis
		\begin{gather*}
			e_0 \odot e_0 \odot e_0 = e_{{0}} \otimes e_{{0}} \otimes e_{{0}} ,\\
			e_0 \odot e_0 \odot e_1 = \frac{1}{3} (e_{{0}} \otimes e_{{0}} \otimes e_{{1}} + e_1 \otimes e_0 \otimes e_0 + e_0 \otimes e_1 \otimes e_0) ,\\
			e_1 \odot e_0 \odot e_1 = \frac{1}{3} (e_{{1}} \otimes e_{{0}} \otimes e_{{1}} + e_1 \otimes e_1 \otimes e_0 + e_0 \otimes e_1 \otimes e_1) ,\\
			e_1 \odot e_1 \odot e_1 = e_{{1}} \otimes e_{{1}} \otimes e_{{1}}
		\end{gather*}
		of the space ${\operatorname{S}}^3 V$.
		
		Instead, the operator $\operatorname{N}_1$ maps
		\[
		e_1 \otimes e_0 \otimes e_0,\ e_1 \otimes e_1 \otimes e_0
		\]
		to the basis of ${\operatorname{N}_1} V$ formed by the tensors
		\begin{gather*}
			\frac{1}{3} (e_{{1}} \otimes e_{{0}} \otimes e_{{0}}  + e_0 \otimes e_1 \otimes e_0 - 2 e_0 \otimes e_0 \otimes e_1) ,\\
			\frac{1}{3} (2 e_{{1}} \otimes e_{{1}} \otimes e_{{0}} - e_1 \otimes e_0 \otimes e_1 - e_0 \otimes e_1 \otimes e_1)
		\end{gather*}
		which, in the notation of Example~\ref{ex_2x2x2_tensors}, are represented as
		\begin{center}
			\begin{tikzpicture}[scale=0.8]
				\draw (0,0) node[scale=1] {$\frac{1}{3}$};
				\draw (2,0) node[scale=1] {$0$};
				\draw (1,1) node[scale=1] {$0$};
				\draw (3,1) node[scale=1] {$0$};
				\draw (0,2) node[scale=1] {$0$};
				\draw (2,2) node[scale=1] {$\frac{1}{3}$};
				\draw (1,3) node[scale=1] {$-\frac{2}{3}$};
				\draw (3,3) node[scale=1] {$0$};
				
				\draw (0,0.3) -- (0,1.7);
				\draw (0.3,0) -- (1.7,0);
				\draw (2,0.3) -- (2,1.7);
				\draw (0.3,2) -- (1.7,2);
				\draw (1,1.3) -- (1,2.7);
				\draw (1.3,1) -- (2.7,1);
				\draw (3,1.3) -- (3,2.7);
				\draw (1.3,3) -- (2.7,3);
				\draw (0.2,0.2) -- (0.8,0.8);
				\draw (0.2,2.2) -- (0.8,2.8);
				\draw (2.2,0.2) -- (2.8,0.8);
				\draw (2.2,2.2) -- (2.8,2.8);
				
			\end{tikzpicture}
			$ \quad $
			\begin{tikzpicture}[scale=0.8]
			\draw (0,0) node[scale=1] {$0$};
			\draw (2,0) node[scale=1] {$\frac{2}{3}$};
			\draw (1,1) node[scale=1] {$-\frac{1}{3}$};
			\draw (3,1) node[scale=1] {$0$};
			\draw (0,2) node[scale=1] {$0$};
			\draw (2,2) node[scale=1] {$0$};
			\draw (1,3) node[scale=1] {$0$};
			\draw (3,3) node[scale=1] {$-\frac{1}{3}$};
			
			\draw (0,0.3) -- (0,1.7);
			\draw (0.3,0) -- (1.7,0);
			\draw (2,0.3) -- (2,1.7);
			\draw (0.3,2) -- (1.7,2);
			\draw (1,1.3) -- (1,2.7);
			\draw (1.3,1) -- (2.7,1);
			\draw (3,1.3) -- (3,2.7);
			\draw (1.3,3) -- (2.7,3);
			\draw (0.2,0.2) -- (0.8,0.8);
			\draw (0.2,2.2) -- (0.8,2.8);
			\draw (2.2,0.2) -- (2.8,0.8);
			\draw (2.2,2.2) -- (2.8,2.8);
			
			\end{tikzpicture}
		\end{center}
		respectively.
	\end{example}
	
	\begin{example}\label{ex_N1_dimension_3}
		If $\dim V = 3$, we see from Equation~\eqref{eq_3_order_decomp_dim} that $\dim {\operatorname{N}_1} V = 8$.
		In this case, the tensors $\big\{ 3 {e_i \otimes e_j \otimes e_k} \big\}_{j \le i > k}$ are mapped by the operator ${\operatorname{N}_1}$ to the basis
		\begin{center}
			$ e_{{1}} \otimes e_{{0}} \otimes e_{{0}}  + e_0 \otimes e_1 \otimes e_0 - 2 e_0 \otimes e_0 \otimes e_1  ,$
			
			$ e_{{2}} \otimes e_{{0}} \otimes e_{{0}} + e_0 \otimes e_2 \otimes e_0 - 2 e_0 \otimes e_0 \otimes e_2 ,$
			
			$ e_{{2}} \otimes e_{{1}} \otimes e_{{1}} + e_1 \otimes e_2 \otimes e_1 - 2 e_1 \otimes e_1 \otimes e_2 ,$

			$ 2 e_{{1}} \otimes e_{{1}} \otimes e_{{0}} - e_1 \otimes e_0 \otimes e_1 - e_0 \otimes e_1 \otimes e_1 ,$
			
			$ 2 e_{{2}} \otimes e_{{2}} \otimes e_{{0}} - e_2 \otimes e_0 \otimes e_2 - e_0 \otimes e_2 \otimes e_2 ,$
			
			$ 2 e_{{2}} \otimes e_{{2}} \otimes e_{{1}} - e_2 \otimes e_1 \otimes e_2 - e_1 \otimes e_2 \otimes e_2 ,$
			
			$ e_{{2}} \otimes e_{{1}} \otimes e_{{0}} + e_1 \otimes e_2 \otimes e_0 - e_0 \otimes e_1 \otimes e_2 - e_1 \otimes e_0 \otimes e_2 ,$
			
			$ e_{{2}} \otimes e_{{0}} \otimes e_{{1}} + e_0 \otimes e_2 \otimes e_1 - e_0 \otimes e_1 \otimes e_2 - e_1 \otimes e_0 \otimes e_2 $
		\end{center}
		of the space ${\operatorname{N}_1} V$.
	\end{example}
	
\subsection{Cyclic-symmetric tensors}
	As already noted in Remark~\ref{rem_focus_partially_symmetric}, according to the index convention adopted in Remark~\ref{rem_convention_indices}, it is natural to focus on tensors that are partially symmetric with respect to the first two indices when representing linear systems of quadrics.
	Furthermore, as the spaces ${\operatorname{N}_1} V$ and ${\operatorname{N}_2} V$ are isomorphic, we will restrict our attention to tensors in ${\operatorname{N}_1} V$ from now on.
	Moreover, since a tensor $T \in {\operatorname{N}_1} V$ satisfies also the cyclic condition $T_{ijk} + T_{jki} + T_{kij} = 0$, we give the following.
	\begin{definition}
		We call a tensor $T$ belonging to ${\operatorname{N}_1} V$ a \emph{cyclic-symmetric} tensor.
		In the following, it is always understood that the partial symmetry of $T$ is with respect to its first and second indices.
	\end{definition}
	
	We give here below the general explicit expression of cyclic-symmetric tensors in low dimension.
	\begin{example}\label{ex_residue_tensor_dim2}
		If $\dim V = 2$, then $\dim {\operatorname{N}_1} V = 2$.
		In this case, a general cyclic-symmetric tensor has the form:
		\begin{center}
			\begin{tikzpicture}[scale=1]
				\draw (0,0) node[scale=1] {$a$};
				\draw (2,0) node[scale=1] {$2b$};
				\draw (1,1) node[scale=1] {$-b$};
				\draw (3,1) node[scale=1] {$0$};
				\draw (0,2) node[scale=1] {$0$};
				\draw (2,2) node[scale=1] {$a$};
				\draw (1,3) node[scale=1] {$-2a$};
				\draw (3,3) node[scale=1] {$-b$};
				
				\draw (0,0.3) -- (0,1.7);
				\draw (0.3,0) -- (1.7,0);
				\draw (2,0.3) -- (2,1.7);
				\draw (0.3,2) -- (1.7,2);
				\draw (1,1.3) -- (1,2.7);
				\draw (1.3,1) -- (2.7,1);
				\draw (3,1.3) -- (3,2.7);
				\draw (1.3,3) -- (2.7,3);
				\draw (0.2,0.2) -- (0.8,0.8);
				\draw (0.2,2.2) -- (0.8,2.8);
				\draw (2.2,0.2) -- (2.8,0.8);
				\draw (2.2,2.2) -- (2.8,2.8);
				
			\end{tikzpicture}
		\end{center}
		\textit{cf.} the last part of Example~\ref{ex_N1_dimension_2}.
	\end{example}
	\begin{example}\label{ex_residue_tensor_dim3}
		If $\dim V = 3$, then, by Equation~\eqref{eq_3_order_decomp_dim}, $\dim {\operatorname{N}_1} V = 8$.
		The faces of a cyclic-symmetric tensor are arranged as follows:
		\begin{center}
			\includegraphics[scale=0.7]{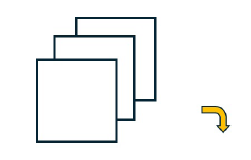}
		\end{center}
		\begin{equation*}
			\begin{bmatrix}
				0 & a & c \\
				a & 2b & d \\
				c & d & 2e
			\end{bmatrix}
			\begin{bmatrix}
				-2a & -b & f \\
				-b & 0 & g \\
				f & g & 2h
			\end{bmatrix}
			\begin{bmatrix}
				-2c & -d-f & -e \\
				-d-f & -2g & -h \\
				-e & -h & 0
			\end{bmatrix}
		\end{equation*}
		\textit{cf.} Example~\ref{ex_N1_dimension_3}.
	\end{example}
	
	We observe immediately that the cyclic-symmetric tensor in Example~\ref{ex_residue_tensor_dim2} is a \emph{sub-tensor} (see \cite[Definition~6.4.2]{BC19} for the definition) of the one in Example~\ref{ex_residue_tensor_dim3}.
	More precisely, in the notation of Remark~\ref{rem_convention_indices}, the former tensor is the upper-left-front sub-tensor of the latter.
	We generalize this construction in any dimension thanks to the following two results.
	
	\begin{proposition}\label{thm_construction_semisym_tensors_inverse}
		Let $T$ be a tensor in ${\operatorname{N}_1} V \cong {\operatorname{N}_1} \C^{n+1}$.
		We obtain a sub-tensor $S \in {\operatorname{N}_1} \C^{n}$ by removing from $T$ its last face $T_n$, as well as the last row and the last column from each face $T_k$, for $0 \le k < n$.
	\end{proposition}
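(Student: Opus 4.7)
The statement asks us to verify that the sub-array $S$ with entries $S_{ijk} := T_{ijk}$ for $0 \le i,j,k \le n-1$ still satisfies the two defining linear conditions of $\operatorname{N}_1\mathbb{C}^{n}$, namely the cyclic condition $S_{ijk}+S_{jki}+S_{kij}=0$ and the partial symmetry $S_{ijk}=S_{jik}$. Since both conditions are homogeneous index conditions that only involve permutations of a triple of indices, the plan is simply to observe that these conditions are \emph{stable under restriction to a coordinate sub-block}.

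The concrete steps are as follows. First, unpack the description of the sub-tensor: discarding the face $T_n$ amounts to deleting entries with third index equal to $n$, while discarding the last row and last column of each remaining face $T_k$ ($k<n$) deletes entries whose first or second index equals $n$. What remains is precisely the block $\{T_{ijk}\}_{0\le i,j,k\le n-1}$, which we call $S$. Second, fix any triple $(i,j,k)$ with $0\le i,j,k\le n-1$; then all cyclic permutations $(j,k,i)$ and $(k,i,j)$ as well as the transposition $(j,i,k)$ again lie in $\{0,\dots,n-1\}^3$, so the relations
\[
S_{ijk}+S_{jki}+S_{kij}=T_{ijk}+T_{jki}+T_{kij}=0,\qquad S_{ijk}-S_{jik}=T_{ijk}-T_{jik}=0
\]
follow immediately from $T\in\operatorname{N}_1\mathbb{C}^{n+1}$. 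By Proposition~\ref{prop_imN1=N1V-imN2=N2V}\eqref{item:N1_N2_iv}, equivalently by the very definition of $\operatorname{N}_1\mathbb{C}^n$, this shows $S\in\operatorname{N}_1\mathbb{C}^n$.

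There is no real obstacle: the only thing worth checking is bookkeeping, i.e.\ making sure that the two index sets appearing in the two linear conditions are indeed closed under the relevant permutations so that the defining equations descend from $T$ to $S$. This is obvious because $\{0,\dots,n-1\}^3$ is invariant under every permutation of the three factors.
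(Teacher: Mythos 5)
Your proof is correct and takes essentially the same route as the paper: both arguments note that the removed entries are exactly those with some index equal to $n$, so the defining cyclic and partial-symmetry relations, which only permute a fixed triple of indices, restrict verbatim to the block $\{0,\dots,n-1\}^3$.
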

	\begin{proof}
		It is clear that the new tensor $S$ constructed in this way is partially symmetric with respect to the first two indices.
		The only subtle point is to check that $S$ satisfy the relation
		\[
		S_{ijk} + S_{jki} + S_{kij} = 0
		\]
		for all $0\le i,j,k <n$.
		This easily follows as any entry $T_{ijk}$ where at least one of the indices $i,j,k$ equals $n$ lies entirely within the portions removed.
		Namely, it is either in the last row of the face $T_k$, in the last column of the face $T_k$ or in the last face $T_n$.
	\end{proof}
	We now prove that the previous proposition can be reversed.
	\begin{proposition}\label{thm_construction_semisym_tensors}
		Given any tensor $S \in {\operatorname{N}_1} \C^{n}$, it is always possible to find a tensor $T \in {\operatorname{N}_1} \C^{n+1}$ such that $S$ is a sub-tensor of $T$. Moreover, $S$ is obtained from $T$ by the procedure introduced in Proposition~\ref{thm_construction_semisym_tensors_inverse}.
	\end{proposition}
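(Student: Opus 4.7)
The plan is to build $T \in {\operatorname{N}_1}\C^{n+1}$ as the simplest possible extension of $S$, namely the extension by zeros. Concretely, I would set
\[
T_{ijk} := \begin{cases} S_{ijk} & \text{if } 0 \le i,j,k \le n-1, \\ 0 & \text{if } \max(i,j,k) = n, \end{cases}
\]
so that $T$ has $S$ sitting in its upper-left-front corner and all entries touching the ``last slice'' in any of the three directions are zero. By construction, removing the last face $T_n$ together with the last row and column from each $T_k$ ($0 \le k < n$) recovers exactly $S$, which is the procedure of Proposition~\ref{thm_construction_semisym_tensors_inverse}; so the second assertion is immediate once the first is established.

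The remaining task is to verify that $T$ belongs to ${\operatorname{N}_1}\C^{n+1}$, that is, that $T_{ijk} = T_{jik}$ and $T_{ijk} + T_{jki} + T_{kij} = 0$ hold for all $0 \le i,j,k \le n$. I would split each check into two cases depending on whether $\max(i,j,k) < n$ or $\max(i,j,k) = n$. In the first case, $T$ agrees with $S$ on the relevant entries and both relations hold because $S \in {\operatorname{N}_1}\C^n$. In the second case, the index $n$ appears in $(i,j,k)$ and therefore also in $(j,i,k)$, $(j,k,i)$, and $(k,i,j)$, so every entry involved is zero by definition of $T$, and both identities reduce to $0 = 0$. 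There is no real obstacle here: the statement is essentially the observation that the linear conditions defining ${\operatorname{N}_1}V$ only couple triples of indices among themselves, so the zero extension is automatically compatible.
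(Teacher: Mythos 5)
Your proof is correct: the zero extension does lie in ${\operatorname{N}_1}\C^{n+1}$, because the two families of linear equations defining ${\operatorname{N}_1}V$ only relate entries whose index triples are permutations of one another, so any triple containing the index $n$ has all its permutations containing $n$ as well, and the relations reduce to $0=0$. However, your route is genuinely different from the paper's. The paper does not just exhibit one extension; it runs an inductive construction in which the entries $T_{0nk},\dots,T_{nnk}$ (for $0\le k<n$) are \emph{arbitrary} parameters, and then shows that the cyclic relations force a unique last face $T_n$ in terms of them. Your construction is the special case where all these parameters vanish. What you gain is brevity and a completely transparent verification; what the paper's argument buys is the content of Remark~\ref{rem_free}, namely an explicit description of \emph{all} tensors $T\in{\operatorname{N}_1}\C^{n+1}$ restricting to a given $S$ and of which entries of $T\setminus S$ are free. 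That extra information is not cosmetic: it is used in the proof of Theorem~\ref{thm_Jacobsthal_base_points}, where one needs each $Q_k$ to be the \emph{generic} quadric restricting to $q_k$ on $\{x_n=0\}$ in order to invoke Bertini. The zero extension would not support that later genericity argument, so while your proof establishes Proposition~\ref{thm_construction_semisym_tensors} as literally stated, it proves strictly less than the paper's proof does.
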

	\begin{proof}
		We prove this proposition by induction.
		
		In Examples~\ref{ex_residue_tensor_dim2} and~\ref{ex_residue_tensor_dim3} we discussed the base cases $n=1,2$.
		Therefore, suppose that a tensor $S$ in ${\operatorname{N}_1} \C^{n}$ is given.
		We build a tensor $T \in {\operatorname{N}_1} \C^{n+1}$ such that $S$ is a sub-tensor of $T$.
		Without loss of generality, we want $S$ to be the upper-left-front sub-tensor (in the sense of Remark~\ref{rem_convention_indices}) of the tensor $T$ we are going to construct.
		
		Hence, for $0 \le k < n$, the face $T_k$ has to be of the form
		\begin{equation*}
			\begin{bmatrix}
				& & & T_{0nk} \\
				& S_k& & \vdots \\
				& & & \vdots \\
				T_{0nk} & \dots & \dots & 2 T_{nnk}
			\end{bmatrix}
		\end{equation*}
		where the symmetric sub-matrix $S_k$ is the $k$-th face of $S$, and $T_{0nk}, \dots, T_{nnk}$ are new elements that can be chosen freely for each $k=0,\dots,n-1$.
		Indeed, note that these elements are not afflicted by the symmetries
		\begin{equation*}
			T_{ijk} + T_{jki} + T_{kij} = 0 \quad \text{for} \quad 0 \le k < n
		\end{equation*}
		of cyclic-symmetric tensors.
		Moreover, we set the entry $(T_k)_{nn}$ to be $2 T_{nnk}$ for later convenience.
		
		Therefore, it remains only to construct the last face $T_n$ of the tensor $T$.
		Note that the symmetries $T_{ijk} = - (T_{jki} + T_{kij})$ allow to express each entry $T_{ijn}$ of $T_n$ uniquely as a combination of the elements $T_{0nk}, \dots, T_{nnk}$, for each $k=0,\dots,n-1$, introduced above.
		Therefore, $T_n$ can be written as
		\begin{equation*}
			\begin{bmatrix}
				- 2 T_{0n0} & - (T_{0 n 1} + T_{1 n 0}) & \dots & - T_{nn0} \\
				- (T_{0 n 1} + T_{1 n 0}) & - 2 T_{1n1} & \dots & - T_{nn1} \\
				\vdots & \vdots & \ddots & \vdots \\
				- T_{nn0} & - T_{nn1} & \dots & 0
			\end{bmatrix} .
		\end{equation*}
		Summing up, from a given $S \in {\operatorname{N}_1} \C^{n}$, we built a new tensor $T \in {\operatorname{N}_1} \C^{n+1}$.
	\end{proof}
	
	\begin{remark}\label{rem_free}
		We observe that in the previous proof, we can freely choose the elements $T_{0nk}, \dots, T_{nnk}$, for each $k=0,\dots,n-1$, of the last columns of the first $n$ faces of $T$.
		The remaining elements of $T \setminus S$ are determined consequently.
	\end{remark}
	
	It is clear that to a cyclic-symmetric tensor as in Example~\ref{ex_residue_tensor_dim3} one can associate a special linear system of symmetric matrices (conics).
	For the same reason, in any dimension, there is a correspondence between tensors in ${\operatorname{N}_1} V$ and special linear system of quadrics, which we investigate Section~\ref{sect_weddle_semisym_tensors}.
	
	\begin{remark}
		Observe that, in the notation of Proposition~\ref{thm_construction_semisym_tensors}, the linear system of quadrics associated to the tensor $S$ corresponds to the linear system associated to $T$ cut with the hyperplane $\{ x_n = 0 \}$.
	\end{remark}

\section{Weddle loci}\label{sect_weddle_loci}
	We mentioned in Section~\ref{sect_semisym_tensors} that elements of ${\operatorname{N}_1} V$ are associated to special linear systems of quadrics, therefore they fit in the general algebraic theory of such systems.
	
	Following the convention of Remark~\ref{rem_convention_indices}, we focus on linear systems $\mathcal{L}$ of quadrics in $\PP(V)$ that can be described by cubic tensors, in the sense that $\mathcal{L}$ is the linear system generated by the quadrics represented by the faces $\{T_k\}_{k}$ of some $T \in V^{\otimes 3}$.
	For this, $T$ is {partially symmetric} with respect to the first two indices.
	In particular, recall that cyclic-symmetric tensors of the space ${\operatorname{N}_1} V$ have this symmetry.
	Moreover, the affine dimension of $\mathcal{L}$ is bounded by the dimension of the vector space $V$.
	
	Summing up, from now on we always make the following identification:
	\begin{center}
		\begin{tabular}{ c c c }
			$\big\{ \mathcal{L} \subset \PP({\operatorname{S}}^2 V) \ \text{s.t.} \ \dim \mathcal{L} \le \dim V \big\}$ &
			$\leftrightarrows$ &
			$\big\{ T \in V^{\otimes 3} \ \text{s.t.} \ T_{ijk} = T_{jik} \big\}$    
		\end{tabular}
	\end{center}
	
	\begin{remark}\label{rem_system_in_S+N}
		By Proposition~\ref{prop_decomposition_3_order} and the decomposition~\eqref{eq_splitting_residual_tensors}, we observe that any linear system of quadrics as above is represented by a tensor belonging to the space
		\[
		{\operatorname{S}}^3 V \oplus {\operatorname{N}_1} V .
		\]
	\end{remark}
	
\subsection{Weddle locus of a linear system}
	Let $[x_0:x_1:\dots:x_n]$ be homogeneous coordinates of $\PP(V) \cong \PP^n$.
	Throughout the paper, by a slight abuse of notation, we will occasionally identify a quadric in $\PP^n$ with its defining equation, or with the symmetric matrix representing it in the chosen coordinates $[x_0:x_1:\dots:x_n]$.
	This identification should cause no confusion, as it will always be clear from the context.
	
	Denote by $Q_0,Q_1,\dots,Q_n$ a set of generators of a linear system $\mathcal{L}$ of quadrics in $\PP^n$ with $\dim \mathcal{L} \le n+1$, and let
	\[
	F:= \lambda_0 Q_0 + \dots + \lambda_n Q_n
	\]
	be a general element of $\mathcal{L}$.
	If we compute the gradient of $F$ (with respect to the partial derivatives $\frac{\partial}{\partial x_i}$, for $0 \le i \le n$), it is straightforward to note that the equation
	\begin{equation*}
		\nabla F = 0
	\end{equation*}
	can be regarded as a linear system in the $\lambda_k$, for $0 \le k \le n$, variables.
	Denote by $\mathbf{W}_{\mathcal{L}}$ the matrix associated to such linear system.
	This matrix can be expressed as
	\begin{equation}\label{eq_weddle_matrix_explicit}
		\begin{bmatrix}
			\frac{\partial Q_0}{\partial x_0} & \dots & \frac{\partial Q_n}{\partial x_0} \\
			\vdots & \ddots & \vdots \\
			\frac{\partial Q_0}{\partial x_n} & \dots & \frac{\partial Q_n}{\partial x_n}
		\end{bmatrix}
	\end{equation}
	see also Remark~\ref{prop_weddle_definition_alternative} below.
	Of course, the entries of $\mathbf{W}_{\mathcal{L}}$ are linear forms in $x_0,\dots,x_n$.
	
	\begin{definition}
		The \emph{Weddle locus} of $\mathcal{L}$ is
		\[
		W(\mathcal{L}) := \left\{ [P_0:\dots:P_n] \in \PP^n \ \text{s.t.} \ \det \mathbf{W}_{\mathcal{L}}\big|_{[P_0:\dots:P_n]} = 0 \right\}
		\]
		and $\mathbf{W}_{\mathcal{L}}$ is called the \emph{Weddle matrix} of $\mathcal{L}$.
	\end{definition}
	
	\begin{remark}\label{rem_geometric_interpretation_weddle}
		If $\dim \mathcal{L} = n+1$, then the Weddle locus of $\mathcal{L}$ has a relevant geometric meaning.
		Namely, a point $P$ belongs to the Weddle locus of $\mathcal{L}$ if and only if $P$ is singular for some quadric in the linear system $\mathcal{L}$.
		
		Indeed, if $P$ is singular for some $F= \lambda_0 Q_0 + \dots + \lambda_n Q_n$, then
		\[
		\nabla F (P) = \mathbf{W}_{\mathcal{L}}\big|_{P} \begin{pmatrix}
			\lambda_0 \\
			\vdots \\
			\lambda_n
		\end{pmatrix}
		\]
		equals zero, where the symbol $\nabla F (P)$ stands for the gradient of $F$ evaluated at $P$.
		Hence, $\mathbf{W}_{\mathcal{L}}\big|_{P}$ is singular.
		The converse follows by the explicit expression~\eqref{eq_weddle_matrix_explicit} of the matrix $\mathbf{W}_{\mathcal{L}}$.
		
		Note also that if $\dim \mathcal{L} < n+1$, then the Weddle locus $W(\mathcal{L})$ covers $\PP^n$, as the determinant of $\mathbf{W}_{\mathcal{L}}$ is identically zero.
	\end{remark}
	Therefore, the Weddle locus is independent of the choice of generators of $\mathcal{L}$, and if a transformation in $\operatorname{\PP GL}(V)$ acts on the generators, then $W(\mathcal{L})$ is transformed accordingly.
	\begin{remark}
		Suppose that $F$ is a cubic homogeneous polynomial.
		One can consider the linear system $\mathcal{L}_F$ generated by the quadrics defined by the partial derivatives of $F$.
		In this case, it follows by definition that the Weddle matrix $\mathbf{W}_{\mathcal{L}_F}$ is the Hessian matrix of $F$.
		Therefore, the Weddle locus of $\mathcal{L}_F$ is exactly the \emph{Hessian locus} of the cubic hypersurface defined by $F$.
		See \cite{BFP24}, and references therein, for further details concerning the Hessian loci.
		
		Note that, in view of Remark~\ref{rem_system_in_S+N}, $\mathcal{L}_F$ is an example of a linear system of quadrics represented by a symmetric tensor.
	\end{remark}
	
	In the following crucial remark, we present a more efficient method for computing the Weddle locus, namely through the cubic tensor associated to the linear system of quadrics $\mathcal{L}$.
	
	\begin{remark}\label{prop_weddle_definition_alternative}
		The Weddle locus $W(\mathcal{L})$ can be computed by means of the following steps:
		\begin{enumerate}
			\item fix $n+1$ generators $Q_0, \dots, Q_n$ of $\mathcal{L}$ as above;\label{stp1_prop_weddle_definition_alternative}
			\item consider the cubic tensor $T$ associated to $\mathcal{L}$, which is $T_{ijk} = (Q_k)_{ij}$;\label{stp2_prop_weddle_definition_alternative}
			\item define a new tensor $\mathbf{T}$ as $\mathbf{T}_{ijk} :=  x_j T_{ijk}$;\label{stp3_prop_weddle_definition_alternative}
			\item compute the contraction $\mathbf{T}^{\{2\}}$ of $\mathbf{T}$ along the second index;\label{stp4_prop_weddle_definition_alternative}
			\item compute the determinant of the resulting matrix $\mathbf{T}^{\{2\}}$.\label{stp5_prop_weddle_definition_alternative}
		\end{enumerate}
		Then, $W(\mathcal{L})$ is the zero locus of $\det \mathbf{T}^{\{2\}}$.
		
		To see this, denote by $M_F$ the matrix associated with the general element $F$ of $\mathcal{L}$.
		By the relation
		\[
		\nabla F = \nabla (\mathbf{x}^t  M_F  \mathbf{x}) = 2 \mathbf{x}^t M_F = 2 \sum_{k=0}^n \lambda_k \mathbf{x}^t Q_k
		\]
		we get
		\begin{align*}
			\mathbf{W}_{\mathcal{L}} &=
			\begin{bmatrix}
				\operatorname{coeff}_{\lambda_k}\frac{\partial F}{\partial x_i}
			\end{bmatrix}_{i,k} \\
			&=
			\begin{bmatrix}
				\frac{\partial (\mathbf{x}^t  Q_k  \mathbf{x})}{\partial x_i}
			\end{bmatrix}_{i,k} \\
			&=
			2 \begin{bmatrix}
				 \mathbf{x}^t \cdot \operatorname{Col}_{i+1}(Q_k)
			\end{bmatrix}_{i,k} \\
			&=
			2 \begin{bmatrix}
				\sum_{j} x_j (Q_k)_{ij}
			\end{bmatrix}_{i,k} \\
			&=
			2 \begin{bmatrix}
				\sum_{j} x_j T_{ijk}
			\end{bmatrix}_{i,k} \\
			&= 2 \mathbf{T}^{\{2\}}
		\end{align*}
		and we finish by computing the determinant.
	\end{remark}
	\begin{remark}
		It is also possible to rephrase Remark~\ref{prop_weddle_definition_alternative} as follows.
		Substitute $\mathbf{T}$ with the tensor $\tilde{\mathbf{T}}$ defined by $\tilde{\mathbf{T}}_{ijk} :=  x_i T_{ijk}$.
		Perform the contraction $\tilde{\mathbf{T}}^{\{1\}}$ of $\tilde{\mathbf{T}}$ with respect to the first index.
		Define the Weddle locus of $\mathcal{L}$ as the zero locus of the determinant of $\tilde{\mathbf{T}}^{\{1\}}$.
		This definition is equivalent to Remark~\ref{prop_weddle_definition_alternative} as the tensor $T$ associated to $\mathcal{L}$ is partially symmetric with respect to the first two indices.
	\end{remark}
	
	The next result highlights how properties of the linear system of quadrics are related to the geometry of its Weddle locus.
	\begin{theorem}\label{prop_base_point_is_singular}
		Every base point of $\mathcal{L}$ is a singular point of $W(\mathcal{L})$.
	\end{theorem}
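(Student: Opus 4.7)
The plan is to prove two things in sequence: that $P$ lies on $W(\mathcal{L})$, and that every partial derivative of $\det \mathbf{W}_{\mathcal{L}}$ vanishes at $P$.

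For the first, I would apply Euler's identity to each generator $Q_k$ of $\mathcal{L}$. Since $Q_k$ is homogeneous of degree $2$ and vanishes at the base point $P$, we have $\sum_i P_i \frac{\partial Q_k}{\partial x_i}(P) = 2 Q_k(P) = 0$. By the explicit form~\eqref{eq_weddle_matrix_explicit}, this reads $P^{T}\mathbf{W}_{\mathcal{L}}|_P = 0$, hence $\mathbf{W}_{\mathcal{L}}|_P$ is singular and $P \in W(\mathcal{L})$.

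For the singularity statement, I would differentiate $\det \mathbf{W}_{\mathcal{L}}$ via cofactor expansion. Writing $N(x):=\mathbf{W}_{\mathcal{L}}(x)$ and using the expression $N_{ik}(x)=\sum_j T_{ijk}x_j$ from Remark~\ref{prop_weddle_definition_alternative} (so that $\partial N_{ik}/\partial x_l = T_{ilk}$), I get
\[
\frac{\partial \det N}{\partial x_l}(P) \;=\; \sum_{i,k} C_{ik}(P)\,T_{ilk},
\]
where $C_{ik}(P)$ is the $(i,k)$-cofactor of $N(P)$. Since $\det N(P)=0$ and $P$ lies in the left kernel of $N(P)$, the matrix $\operatorname{adj}(N(P))$ has rank at most $1$ with all rows proportional to $P^T$, so $C_{ik}(P) = \alpha\, u_k\, P_i$ for some scalar $\alpha$ and some vector $u$ (with the convention $\alpha u = 0$ when $\operatorname{rank} N(P)<n$). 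Moreover, $u$ can be taken to be a right null vector of $N(P)$, because $N(P)\operatorname{adj}(N(P))=0$.

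Substituting this back, and observing that the partial symmetry $T_{ilk}=T_{lik}$ makes the matrix $F$ with entries $F_{il}:=\sum_k u_k T_{ilk}$ symmetric, I obtain
\[
\frac{\partial \det N}{\partial x_l}(P) \;=\; \alpha \sum_i P_i\, F_{il} \;=\; \alpha\,(F P)_l,
\]
where $F$ is the matrix of the quadric $\sum_k u_k Q_k \in \mathcal{L}$. The relation $N(P)u=0$ translates directly into $FP=0$, so every partial derivative vanishes at $P$, as required. The hard part will be the adjugate structure step: the rank of $N(P)$ need not drop by two, so one cannot invoke the standard smoothness criterion for determinantal hypersurfaces; it is the combination of the partial symmetry of $T$ (which upgrades $F$ to a genuine symmetric quadric in $\mathcal{L}$) and Euler's identity (which pins down $P$ as the left null vector) that forces the gradient to collapse to $FP=0$.
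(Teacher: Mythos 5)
Your proposal is correct, and every step checks out: Euler's identity gives $P^{T}\mathbf{W}_{\mathcal{L}}|_{P}=0$; Jacobi's formula gives $\partial_{x_\ell}\det N(P)=\sum_{i,k}C_{ik}(P)T_{i\ell k}$; the adjugate of a singular matrix has rank at most one with rows in the left kernel (spanned by $P$ when the corank is one, and zero otherwise), so $C_{ik}(P)=\alpha u_k P_i$ with $N(P)u=0$; and the identity $(N(P)u)_i=\sum_j P_j\sum_k u_kT_{ijk}=(FP)_i$, together with the symmetry of $F$ coming from $T_{ijk}=T_{jik}$, collapses the gradient to $\alpha FP=0$. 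The paper proves the same statement by a more hands-on route: it normalizes $P=[1:0:\dots:0]$, observes via Euler that the first row of $\mathbf{W}_{\mathcal{L}}|_{P}$ vanishes, and then differentiates the Laplace expansion along that row, recognizing each partial derivative at $P$ as the expansion of a matrix with a repeated (or zero) row — the repetition being exactly where the symmetry $(Q_k)_{0s}=(Q_k)_{s0}$ enters. The two arguments use the same two inputs (Euler's identity placing $P$ in the left kernel, and the partial symmetry of $T$), but yours is coordinate-free and structurally more transparent: it exhibits the gradient of $\det\mathbf{W}_{\mathcal{L}}$ at $P$ as a multiple of $FP$, i.e.\ of the gradient at $P$ of the quadric $\sum_k u_kQ_k\in\mathcal{L}$ singled out by the right kernel, which ties the computation back to the geometric description of $W(\mathcal{L})$ in Remark~\ref{rem_geometric_interpretation_weddle}. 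Your closing observation is also well taken: since the corank of $\mathbf{W}_{\mathcal{L}}|_{P}$ need not be $\ge 2$, the generic singularity criterion for determinantal hypersurfaces does not apply, and it is precisely the combination of partial symmetry and Euler's identity that forces the vanishing.
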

	\begin{proof}
		Suppose that $\mathcal{L}$ is generated by $Q_0,\dots,Q_n$ and let $P$ a base point of $\mathcal{L}$.
		Without loss of generality, assume that $P = [1,0,\dots,0]$.
		By Euler's Theorem we have that
		\[
		\sum_{i=0}^n \bigg[ x_i \frac{\partial Q_k}{\partial x_i} \bigg]_{[x]=P} = 2 Q_k(P) = 0
		\]
		hence, for $0 \le k \le n$, we immediately get
		\begin{equation}\label{eq_0_entries_base_point}
		\frac{\partial Q_0}{\partial x_0}(P) = \dots = \frac{\partial Q_n}{\partial x_0}(P) = 0 .
		\end{equation}
		From the chain of equalities
		\begin{equation*}
			\frac{\partial Q_k}{\partial x_i}(P) = 2 \sum_{j=0}^{n} \bigg[ x_j (Q_k)_{ij} \bigg]_{[x]=P} = 2 (Q_k)_{i0}
		\end{equation*}
		Equation~\eqref{eq_0_entries_base_point} reads as
		\begin{equation*}
			(Q_0)_{00} = \dots = (Q_n)_{00} = 0
		\end{equation*}
		for the tensor associated to $\mathcal{L}$.
		We follow here the notation of Remark~\ref{prop_weddle_definition_alternative}.
		
		By the Laplace's expansion of the matrix $\mathbf{W}_{\mathcal{L}}$ with respect to its first row
		\begin{equation*}
			\det \mathbf{W}_{\mathcal{L}} = \det \begin{bmatrix}
				\frac{\partial Q_0}{\partial x_0} & \dots & \frac{\partial Q_n}{\partial x_0} \\
				\vdots & \ddots & \vdots \\
				\frac{\partial Q_0}{\partial x_n} & \dots & \frac{\partial Q_n}{\partial x_n}
			\end{bmatrix}
			= \sum_{k = 0}^{n} (-1)^k \frac{\partial Q_k}{\partial x_0} \cdot {W}_{0k}
		\end{equation*}
		we immediately see, by Equation~\eqref{eq_0_entries_base_point}, that $P \in W(\mathcal{L})$.
		Here, ${W}_{ij}$ denotes the determinant of the $n \times n$ sub-matrix obtained by removing from $\mathbf{W}_{\mathcal{L}}$ the $i$-th row and $j$-th column.
		
		We need to show that $\frac{\partial \det\mathbf{W}_{\mathcal{L}}}{\partial x_s}\big|_P = 0$, for $0 \le s \le n$.
		By differentiating, we get
		\begin{align*}
			\frac{\partial}{\partial x_s} \det \mathbf{W}_{\mathcal{L}} (P) &= \sum_{k = 0}^{n} (-1)^k \bigg[ \frac{\partial^2 Q_k}{\partial x_s \partial x_0} \cdot {W}_{0k} + \frac{\partial Q_k}{\partial x_0} \cdot \frac{\partial {W}_{0k}}{\partial x_s} \bigg]_{[x]=P} \\
			&= \sum_{k = 0}^{n} (-1)^k \frac{\partial^2 Q_k}{\partial x_s \partial x_0}(P) \cdot {W}_{0k}(P)
		\end{align*}
		where the last equality follows again from Equation~\eqref{eq_0_entries_base_point}.
		The computation
		\begin{align*}
			\frac{\partial^2 Q_k}{\partial x_s \partial x_i} &= \frac{\partial}{\partial x_s} \left( 2 \sum_{j=0}^{n} x_j (Q_k)_{ij} \right) \\
			&= 2 \sum_{j=0}^{n} \delta_{sj}(Q_k)_{ij} + x_j \frac{\partial (Q_k)_{ij}}{\partial x_s} \\
			&= 2(Q_k)_{is} + 2\sum_{j=0}^{n} x_j \frac{\partial (Q_k)_{ij}}{\partial x_s}
		\end{align*}
		yields
		\[
		\frac{\partial^2 Q_k}{\partial x_s \partial x_0}(P) = 2(Q_k)_{0s} + 2\frac{\partial (Q_k)_{00}}{\partial x_s} = 2(Q_k)_{0s}
		\]
		Also, observe that, in the end, the Weddle matrix evaluated in $P$ reads
		\[
		\mathbf{W}_{\mathcal{L}} \big|_P = 2\begin{bmatrix}
			(Q_0)_{00} & \dots & (Q_n)_{00} \\
			(Q_0)_{10} & \dots & (Q_n)_{10} \\
			\vdots & \ddots & \vdots \\
			(Q_0)_{n0} & \dots & (Q_n)_{n0}
		\end{bmatrix} = 2\begin{bmatrix}
		0 & \dots & 0 \\
		(Q_0)_{01} & \dots & (Q_n)_{01} \\
		\vdots & \ddots & \vdots \\
		(Q_0)_{0n} & \dots & (Q_n)_{0n}
		\end{bmatrix}
		\]
		To conclude, we have to show that
		\begin{equation*}\label{eq_P_singular_weddle}
			\frac{\partial}{\partial x_s} \det \mathbf{W}_{\mathcal{L}}(P) = 2\sum_{k = 0}^{n} (-1)^k (Q_k)_{0s} \cdot {W}_{0k}(P)
		\end{equation*}
		vanishes for $0 \le s \le n$.
		
		For $s=0$, note that
		\[
		\frac{1}{2}\frac{\partial}{\partial x_0}\det \mathbf{W}_{\mathcal{L}}(P) =
		(Q_0)_{00} \cdot  {W}_{00}(P) - (Q_1)_{00} \cdot {W}_{01}(P) + \dots + (-1)^n (Q_n)_{00} \cdot {W}_{0n}(P)
		\]
		is trivially zero, as it is the Laplace's expansion of the matrix $\frac{1}{2} \mathbf{W}_{\mathcal{L}} \big|_P$ with respect to its first row, the latter consisting only of zeros.
		
		For $s \neq 0$, we observe that the expression
		\[
		\frac{1}{2}\frac{\partial}{\partial x_s}\det \mathbf{W}_{\mathcal{L}}(P) =
		(Q_0)_{0s} \cdot  {W}_{00}(P) - (Q_1)_{0s} \cdot {W}_{01}(P) + \dots + (-1)^n (Q_n)_{0s} \cdot {W}_{0n}(P)
		\]
		equals exactly the Laplace's expansion of the matrix
		\[
		\mathbf{W}_s := \begin{bmatrix}
			(Q_0)_{0s} & \dots & (Q_n)_{0s} \\
			(Q_0)_{01} & \dots & (Q_n)_{01} \\
			\vdots & \ddots & \vdots \\
			(Q_0)_{0n} & \dots & (Q_n)_{0n}
		\end{bmatrix}
		\]
		with respect to its first row.
		The matrix $\mathbf{W}_s$ has been obtained from $\frac{1}{2} \mathbf{W}_{\mathcal{L}} \big|_P$ by replacing the first row of the latter with its $s$-th row.
		Given that the first and $s$-th row of $\mathbf{W}_s$ are equal, we can conclude that $\frac{\partial \det\mathbf{W}_{\mathcal{L}}}{\partial x_s}\big|_P$ equals zero.
	\end{proof}
	
	The converse of Theorem~\ref{prop_base_point_is_singular} does not hold, as the following example shows.
	\begin{example}\label{ex_bpf_linear_sys_singular_weddle}
		Let $\mathcal{L}$ be the linear system of $\PP^2$ generated by $x_0^2, x_1^2, x_2^2$.
		By following the steps of Remark~\ref{prop_weddle_definition_alternative}, we immediately compute the contraction matrix $\mathbf{T}^{\{2\}}$, which is
		\[
		\begin{bmatrix}
			x_0 & 0 & 0 \\
			0 & x_1 & 0 \\
			0 & 0 & x_2
		\end{bmatrix}
		\]
		hence the Weddle locus of $\mathcal{L}$ is defined by the equation $ x_0 x_1 x_2 = 0$.
		It is clear that $W(\mathcal{L})$ has three singular points, though $\mathcal{L}$ is base point free.
	\end{example}

\section{Rank and Weddle loci}
	In the next series of results we relate the {rank} of a linear system $\mathcal{L}$ with the geometry of its Weddle locus.
	
	Suppose that $\dim V = n+1$.
	Recall, from Remark~\ref{rem_geometric_interpretation_weddle}, that if $\dim \mathcal{L} \le n$ then $W(\mathcal{L})$ equals $\PP^n$.
	Therefore, from now on assume that $\dim \mathcal{L} = n+1$.
	Let
	\begin{equation*}
		v_{n,2} \colon \PP(V) \cong \PP^n \to \PP({\operatorname{S}}^2 V) \cong \PP^{N}
	\end{equation*}
	be the degree~$2$ Veronese map of $\PP^n$, where $N = \frac{n(n+3)}{2}$.
	Set $X := v_{n,2}(\PP^n)$, and let $S$ denote the Segre embedding of $\PP^n \times X$ into $\PP^{(n+1)(N+1)-1}$.
	Recall that the (symmetric) rank of $\mathcal{L}$, which depends on the Grassmann-secant varieties that contain the linear space in $\PP({\operatorname{S}}^2 V)$ identified by $\mathcal{L}$, is equal to the $S$-rank of the partially symmetric tensor $T$ associated to $\mathcal{L}$.
	See, for instance, \cite{Lan12, BBCC13}.
	Indeed, there is a one-to-one correspondence between the decompositions of $\mathcal{L}$ and the decompositions of $T$.
	
	The rank of the linear system $\mathcal{L}$ is, trivially, bounded by
	\begin{equation}\label{eq_chain_dimensions_rank}
		n+1 = \dim V \le \ \ \operatorname{rk} \mathcal{L} \ \ \le \dim {\operatorname{S}}^2 V = \frac{(n+1)(n+2)}{2} .
	\end{equation}
	
	Since in the case $n=1$ there is nothing to say, we start with the following.
	
\subsection{Case $n = 2$}\label{subsect_n=2}
	Here $\dim \mathcal{L} = 3$.
	Therefore, $\mathcal{L}$ identifies a projective plane in $\PP({\operatorname{S}}^2 V) \cong \PP^5$, and the chain of inequalities~\eqref{eq_chain_dimensions_rank} becomes
	\begin{equation*}
		3 \le \operatorname{rk} \mathcal{L} \le 6 .
	\end{equation*}
We will see below that the rank of the tensor, for a generic choice of $\mathcal L$, is $4$.

	We start with the following useful observation.
	\begin{remark}\label{rem_weddle_matrix_conics}
		Suppose that the linear system $\mathcal{L}$ is generated by the conics
		\begin{equation*}
			\begin{bmatrix}
				A_{00} & A_{01} & A_{02} \\
				A_{01} & A_{11} & A_{12} \\
				A_{02} & A_{12} & A_{22}
			\end{bmatrix}, \ \
			\begin{bmatrix}
				B_{00} & B_{01} & B_{02} \\
				B_{01} & B_{11} & B_{12} \\
				B_{02} & B_{12} & B_{22}
			\end{bmatrix}, \ \
			\begin{bmatrix}
				C_{00} & C_{01} & C_{02} \\
				C_{01} & C_{11} & C_{12} \\
				C_{02} & C_{12} & C_{22}
			\end{bmatrix} .
		\end{equation*}
		By applying the steps~\eqref{stp1_prop_weddle_definition_alternative}-\eqref{stp4_prop_weddle_definition_alternative} of Remark~\ref{prop_weddle_definition_alternative}, we have that the explicit form of the Weddle matrix~$\frac{1}{2}\mathbf{W}_{\mathcal{L}}$ associated to $\mathcal{L}$ is
		\begin{equation*}
			\begin{bmatrix}
				A_{00}x_0 + A_{01}x_1 + A_{02}x_2 & B_{00}x_0 + B_{01}x_1 + B_{02}x_2 & C_{00}x_0 + C_{01}x_1 + C_{02}x_2\\
				A_{01}x_0 + A_{11}x_1 + A_{12}x_2 & B_{01}x_0 + B_{11}x_1 + B_{12}x_2 & C_{01}x_0 + C_{11}x_1 + C_{12}x_2\\
				A_{02}x_0 + A_{12}x_1 + A_{22}x_2 & B_{02}x_0 + B_{12}x_1 + B_{22}x_2 & C_{02}x_0 + C_{12}x_1 + C_{22}x_2
			\end{bmatrix} .
		\end{equation*}
	\end{remark}
	
	\begin{example}
		We already mentioned in Remark~\ref{rem_geometric_interpretation_weddle} that if $\dim \mathcal{L} < n+1$ (here $n=2$), then $W(\mathcal{L}) = \PP^n$.
		This is not the only case in which the Weddle locus is trivial.
		For instance, consider the linear system $\mathcal{L}$ generated by the following (degenerate) conics
		\begin{equation*}
			\begin{bmatrix}
				a & a & b \\
				a & a & b \\
				b & b & c
			\end{bmatrix}, \ \
			\begin{bmatrix}
				d & d & e \\
				d & d & e \\
				e& e & f
			\end{bmatrix}, \ \
			\begin{bmatrix}
				g & g & h \\
				g & g & h \\
				h & h & i
			\end{bmatrix} .
		\end{equation*}
		Following Remark~\ref{rem_weddle_matrix_conics}, we see that the Weddle matrix associated to $\mathcal{L}$ is
		\begin{equation*}
			\begin{bmatrix}
				ax_0 + ax_1 + bx_2 & dx_0 + dx_1 + ex_2 & gx_0 + gx_1 + hx_2\\
				ax_0 + ax_1 + bx_2 & dx_0 + dx_1 + ex_2 & gx_0 + gx_1 + hx_2\\
				bx_0 + bx_1 + cx_2 & ex_0 + ex_1 + fx_2 & hx_0 + hx_1 + ix_2
			\end{bmatrix} .
		\end{equation*}
		As this matrix has two equal rows, it is clear the Weddle locus of $\mathcal{L}$ is trivial, even if $\dim \mathcal{L} = 3 $.
		This means that each point in $\PP^2$ is singular for some conic in $\mathcal{L}$.
	\end{example}
	
	Now, we investigate the cases $\operatorname{rk} \mathcal{L} = 3, 4$.
	
\subsubsection*{Rank $3$ case}
	Let $\mathcal{L}$ be a general linear system of rank~$3$.
	Hence, there is a projective plane $\mathcal{L}^\prime \cong \PP^2$ containing $\mathcal{L}$, which intersects the Veronese variety $v_{2,2}(\PP^2) \subset \PP^5$ in three points.
	Obviously, by dimensional reasons, $\mathcal{L}^\prime = \mathcal{L}$.
	\begin{proposition}\label{prop_dim3_rk3}
		The Weddle locus of a general linear system $\mathcal{L}$ of rank~$3$ splits in the union of~$3$ lines.
	\end{proposition}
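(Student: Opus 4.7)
The plan is to exploit the rank hypothesis to put $\mathcal{L}$ into a simple normal form, and then to read off the splitting of $W(\mathcal{L})$ from a direct computation.

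First, I would unpack $\operatorname{rk}\mathcal{L}=3$. Since $\mathcal{L}$ corresponds to a plane in $\PP({\operatorname{S}}^2 V)\cong\PP^5$ of rank exactly $3$, there exist three rank-one symmetric tensors $\ell_1^2,\ell_2^2,\ell_3^2$ on the Veronese surface $v_{2,2}(\PP^2)$ whose span contains $\mathcal{L}$; by a dimension count that span coincides with $\mathcal{L}$, so $\mathcal{L}=\langle\ell_1^2,\ell_2^2,\ell_3^2\rangle$. Linear independence of the $\ell_i^2$ forces the lines $\ell_i$ to be pairwise distinct, and for a general rank-$3$ system they are moreover not concurrent, so they cut out a triangle in $\PP^2$.

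Next, I would invoke the $\operatorname{\PP GL}(V)$-equivariance of the Weddle construction, recorded immediately after Remark~\ref{rem_geometric_interpretation_weddle}: a transformation in $\operatorname{\PP GL}(V)$ acting on the generators of $\mathcal{L}$ transports $W(\mathcal{L})$ accordingly. A change of coordinates sending $\ell_i\mapsto x_{i-1}$ therefore identifies $\mathcal{L}$ with the linear system $\langle x_0^2, x_1^2, x_2^2\rangle$, and reduces the proof of the splitting to this model.

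This model is exactly the one already analyzed in Example~\ref{ex_bpf_linear_sys_singular_weddle}: following the recipe of Remark~\ref{prop_weddle_definition_alternative}, the contraction $\mathbf{T}^{\{2\}}$ of the associated tensor is the diagonal matrix with diagonal entries $x_0,x_1,x_2$, whose determinant $x_0 x_1 x_2$ cuts out the union of the three coordinate lines. This is precisely the splitting asserted by the proposition.

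No individual step is technically hard; the main point worth dwelling on is the justification that ``general'' really does produce three double lines in general position, so that the coordinate change above is available. This is a standard semicontinuity argument, since the rank-$3$ stratum is parametrized generically by unordered triples of points on the Veronese surface, and the condition that the three underlying lines be concurrent is a proper closed subcondition of that parameter space.
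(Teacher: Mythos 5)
Your proposal is correct and follows essentially the same route as the paper: reduce, via the $\operatorname{\PP GL}(V)$-equivariance of the Weddle construction, to the normal form $\mathcal{L}=\langle x_0^2,x_1^2,x_2^2\rangle$ and then quote the computation of Example~\ref{ex_bpf_linear_sys_singular_weddle}. The only (immaterial) difference is in how the non-degeneracy of $\ell_1,\ell_2,\ell_3$ is justified: you invoke genericity, while the paper argues it can never fail because the Veronese surface is cut out by quadrics and contains no lines.
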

	\begin{proof}
		By a change of coordinates, it is always possible to assume that the three intersection points of $\mathcal{L}^\prime$ and $v_{2,2}(\PP^2)$ mentioned above are $x_0^2, x_1^2, x_2^2$. Indeed the three points of intersection cannot be collinear, since $v_{2,2}(\PP^2)$ is cut by quadrics and contains no lines.
		In this way, we are exactly in the same situation of Example~\ref{ex_bpf_linear_sys_singular_weddle}.
	\end{proof}
	\begin{remark}\label{rem_converse_rank3_3lines}
		Observe that the converse of the previous proposition does not hold.
		Indeed, if $\mathcal{L}$ is a linear system with three, non collinear, base points, then $\mathcal{L}$ does not contain double lines.
		Hence, the projective plane associated to $\mathcal{L}$ does not intersect the Veronese variety $v_{2,2}(\PP^2)$, therefore $\mathcal{L}$ is not of rank~$3$.
		On the other hand, by Theorem~\ref{prop_base_point_is_singular}, $W(\mathcal{L})$ is a cubic with three, non collinear, singular points, thus $W(\mathcal{L})$ splits in the union of three lines.
	\end{remark}
	
\subsubsection*{Rank $4$ case}
	Since the Veronese variety $v_{2,2}(\PP^2)$ is a quartic surface in $\PP^5$, a general projective space $\mathcal{L}^\prime \cong \PP^3$ intersects $v_{2,2}(\PP^2)$ in four points.
	Therefore, a general linear system $\mathcal{L}$ (of dimension~$3$) which is contained in $\mathcal{L}^\prime$ has rank $\operatorname{rk} \mathcal{L} \le 4$.
So, the rank of the associated tensor is $4$, by \cite{BBCC13}.
	
	Observe that, by definition, the Weddle locus of a linear system of quadrics in $\PP^n$ is a \emph{determinantal} hypersurface of degree $n+1$ (or is $\PP^n$).
	Here $n=2$, thus $W(\mathcal{L})$ is a determinantal plane cubic.
	Moreover, recall that by \cite{Bea00} the general plane cubic is determinantal.
	The first main result of this section is the following.
	
	\begin{theorem}
		A general elliptic cubic curve in $\PP^2$ is the Weddle locus of a suitable linear system $\mathcal{L}$ of rank~$4$.
	\end{theorem}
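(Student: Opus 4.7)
The plan is to prove dominance of the Weddle map
\[
W \colon \operatorname{Gr}(3, \operatorname{S}^2 V) \dashrightarrow \PP(\operatorname{S}^3 V^\vee)
\]
from the Grassmannian of nets of conics to the linear system of plane cubics, and then exploit density of the rank-$4$ locus. Both source and target have dimension $9$ (since $\dim V = 3$), so dominance is equivalent to surjectivity of the differential $dW_{\mathcal{L}_0}$ at some point $\mathcal{L}_0$. The map $W$ is well defined on $\operatorname{Gr}(3, \operatorname{S}^2 V)$ because changing a basis of $\mathcal{L}$ multiplies $\mathbf{W}_\mathcal{L}$ on the right by an invertible matrix, rescaling $\det\mathbf{W}_\mathcal{L}$.

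The key step is to produce an explicit net $\mathcal{L}_0$ whose Weddle cubic $C_0$ is smooth (hence elliptic) and at which $dW$ is surjective. A natural candidate is the net of conics through four general base points, whose Weddle locus is the classical Weddle cubic. Using the explicit form of $\mathbf{W}_\mathcal{L}$ from Remark~\ref{prop_weddle_definition_alternative} and a local trivialisation $T_{\mathcal{L}_0}\operatorname{Gr}(3, \operatorname{S}^2 V) \cong \operatorname{Hom}(\mathcal{L}_0, \operatorname{S}^2 V / \mathcal{L}_0)$, one writes $dW$ as a $9 \times 9$ Jacobian in coordinates, and choosing $\mathcal{L}_0$ with enough symmetry makes this computation tractable (for instance by putting the Jacobian into block-diagonal form).

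Once dominance is established, the image of $W$ contains a Zariski-dense open of $\PP^9$, and hence contains a dense open of smooth elliptic cubics. By \cite{BBCC13} the rank-$4$ locus is open and dense in $\operatorname{Gr}(3, \operatorname{S}^2 V)$, since $4$ is the generic rank of a partially symmetric $3 \times 3 \times 3$ tensor; restricting a dominant map to a dense open preserves dominance, so a general elliptic cubic arises as $W(\mathcal{L})$ for some rank-$4$ net $\mathcal{L}$, as required.

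The main obstacle is verifying surjectivity of $dW_{\mathcal{L}_0}$. One could proceed by the direct computation sketched above, or, more conceptually, invoke Beauville's theorem \cite{Bea00}, which asserts that every plane cubic $C$ admits a $3 \times 3$ determinantal representation, and then prove that for general $C$ the $\operatorname{GL}(3) \times \operatorname{GL}(3)$-orbit of such a representation meets the $18$-dimensional locus of \emph{Weddle matrices} — those whose columns are gradients of symmetric quadratic forms, equivalently whose associated tensor is partially symmetric in the first two indices. This reduces the argument to a parameter count, once one understands the geometry of the Weddle locus inside the space of all determinantal representations.
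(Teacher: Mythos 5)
Your overall strategy --- prove that the Weddle map $W \colon \operatorname{Gr}(3, \operatorname{S}^2 V) \dashrightarrow \PP(\operatorname{S}^3 V)$ between two $9$-dimensional varieties is dominant, then intersect with the dense rank-$4$ locus --- is sound in outline, and the framing (well-definedness on the Grassmannian, equality of dimensions, openness of the generic-rank locus) is correct. But the proof has a genuine gap at exactly the point you yourself flag as ``the main obstacle'': you never actually establish surjectivity of $dW_{\mathcal{L}_0}$ at any point. You name a candidate net ($\mathcal{L}_0$ through four general points), assert that the $9\times 9$ Jacobian ``becomes tractable'' with enough symmetry, and stop. Dominance is the entire content of the theorem here, so leaving this verification as a sketch means the statement is not proved. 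The alternative route via Beauville's determinantal representations has the same problem in a different guise: the expected-dimension count ($18+18-27=9>0$) does not by itself show that the $\operatorname{GL}(3)\times\operatorname{GL}(3)$-orbit of a determinantal representation of a general cubic actually meets the locus of partially symmetric tensors --- a nonempty expected intersection can still be empty without a transversality or irreducibility argument, which you do not supply.

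It is worth comparing with the paper's proof, which sidesteps the $9$-dimensional differential computation entirely. Since the Weddle construction is $\operatorname{PGL}(V)$-equivariant and two smooth plane cubics are projectively equivalent if and only if they have the same $j$-invariant, it suffices to show that the composite map $J = j \circ w$ from partially symmetric tensors to $\C$ is dominant. A non-constant map from an irreducible variety to $\C$ is automatically dominant, so the whole theorem reduces to exhibiting \emph{two} explicit nets of conics whose Weddle cubics are smooth with distinct $j$-invariants --- a finite, checkable computation that the paper carries out. If you want to salvage your approach, the cleanest fix is to adopt this reduction: rather than proving surjectivity of a $9\times 9$ Jacobian, compute the $j$-invariant of the Weddle cubic for two concrete nets and check the values differ. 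Alternatively, your differential computation could in principle be completed at an explicit $\mathcal{L}_0$ (e.g.\ by machine), but as written it is a plan, not a proof.
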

	\begin{proof}
		Recall that given a smooth plane cubic curve $\mathcal{C}$ whose Weierstrass normal form is
		\begin{equation*}
			x_0 x_2^2 = x_1^3 + a x_0^2 x_1 + b x_0^3
		\end{equation*}
		the $j$-invariant of $\mathcal{C}$ is the quantity
		\begin{equation*}
			j(\mathcal{C}) = 256  \frac{27 a^3}{4 a^3 + 27 b^2} .
		\end{equation*}
		We will use the well known fact that two smooth plane cubic curves are projectively equivalent if and only if they have the same $j$-invariant.
		
		Recall that, by Remark~\ref{rem_system_in_S+N}, we can consider the map
		\[
		w \colon {\operatorname{S}}^3 V \oplus {\operatorname{N}_1} V \to \PP({\operatorname{S}}^3 V) \cong \PP^9
		\]
		which sends a partially symmetric tensor $T$ to the Weddle locus $W(\mathcal{L}_T)$ of its associated linear system of conics $\mathcal{L}_T$.
		By composing $w$ with the $j$-invariant, we can consider the diagram
		\begin{equation*}
			\begin{tikzcd}
				{\operatorname{S}}^3 V \oplus {\operatorname{N}_1} V \arrow[dr,"J",',dashed] \arrow[rr,"w"]{}
				& & \PP({\operatorname{S}}^3 V) \arrow[dl,"j",dashed] \\
				& \C
			\end{tikzcd}
		\end{equation*}
		which is defined on the partially symmetric tensors which have a smooth Weddle locus.
		Anyhow, $J$ is either dominant on $\C$ or its image is the singleton $\{ \alpha \}$, for some $\alpha \in \C$.
		We prove that the former case holds, by constructing two tensors $T_1$ and $T_2$ in ${\operatorname{S}}^3 V \oplus {\operatorname{N}_1} V$, whose Weddle loci have different $j$-invariants.
		
		Let $T_1$ be the tensor associated to the linear system generated by the conics
		\begin{equation*}
			x_0^2 + 2 x_0 x_1 + 2 x_0 x_2 + 2 x_1 x_2, \quad
			x_0^2 + x_1^2, \quad
			x_0^2 + x_2^2 .
		\end{equation*}
		By Remark~\ref{rem_weddle_matrix_conics} its (normalized) Weddle matrix is
		\begin{equation*}
			\begin{bmatrix}
				x_0 + x_1 + x_2 & x_0  & x_0    \\
				x_0 + x_2       & x_1  & 0  	\\
				x_0 + x_1       &  0   & x_2
			\end{bmatrix}
		\end{equation*}
		and its Weddle locus is the smooth cubic
		\[
		\mathcal{C}_1 := \{ x_0 x_1 x_2 - x_0^2 x_1 - x_0 x_1^2 - x_0^2 x_2 - x_0 x_2^2 + x_1^2 x_2 + x_1 x_2^2 = 0 \}
		\]
		whose Weierstrass form in the affine chart $\{x_0 \neq 0\}$ is
		\[
		x_2^2 = x_1^3 - \frac{121}{48}x_1 + \frac{845}{864}
		\]
		therefore, $j(\mathcal{C}_1) = \frac{1771561}{612} \sim 2894,71$.
		
		Let $T_2$ be the tensor associated to the linear system generated by the conics
		\begin{equation*}
			x_0^2 + 2 x_0 x_1 + 2 x_0 x_2 + x_1^2 + x_2^2 , \quad
			2x_0 x_1 - x_1^2, \quad
			2x_0 x_2 - x_2^2 .
		\end{equation*}
		Again by Remark~\ref{rem_weddle_matrix_conics} its Weddle matrix is
		\begin{equation*}
			\begin{bmatrix}
				x_0 + x_1 + x_2 & x_1        & x_2      \\
				x_0 + x_1       & x_0 - x_1  & 0		\\
				x_0 + x_2       &  0         & x_0 - x_2
			\end{bmatrix}
		\end{equation*}
		and its Weddle locus is
		\[
		\mathcal{C}_2 := \{ x_0 x_1 x_2 - x_0^2 x_1 - 2 x_0 x_1^2 - x_0^2 x_2 - 2 x_0 x_2^2 + x_0^3 + 2 x_1^2 x_2 + 2 x_1 x_2^2 = 0 \}
		\]
		whose Weierstrass form in the affine chart $\{x_0 \neq 0\}$ is
		\[
		x_2^2 = x_1^3 - \frac{1633}{48}x_1 + \frac{61201}{864}
		\]
		hence, $j(\mathcal{C}_2) = \frac{4354703137}{352512} \sim 12.353,35$.
	\end{proof}
	
\subsection{Case $n = 3$}\label{subsect_n=3}
	In this case $\dim \mathcal{L} = 4$.
	Therefore, $\mathcal{L}$ identifies a projective~$3$-space in $\PP({\operatorname{S}}^2 V) \cong \PP^9$, and the inequalities in~\eqref{eq_chain_dimensions_rank} are
	\begin{equation*}
		4 \le \operatorname{rk} \mathcal{L} \le 10 .
	\end{equation*}
	We investigate the cases $\operatorname{rk} \mathcal{L} = 4, 5$.
	\begin{remark}
		By the Second Terracini's Lemma (see \cite{CC01}, \cite{BBCC13}), the rank $r$ of a generic~$4$-dimensional linear system of quadrics in $\PP^3$ is equal to the minimal value for which the $r$-th secant variety of $\PP^3 \times X$ fills the ambient space $\PP^{39}$, where $X$ is the degree $2$ Veronese embedding of $\PP^3$.
		One computes immediately that the expected value for the generic rank is $6$. It follows from \cite{AB09} that $\PP^3 \times X$, which is a $(1,2)$ Segre-Veronese variety, is not defective. Hence the generic rank equals the expected value.
	\end{remark}

We focus on the cases where the rank of the tensor is smaller than $6$, the generic rank.

\subsubsection*{Rank $4$ case}
	Let $\mathcal{L}$ be a general linear system of rank~$4$.
	In this case, there is a $\mathcal{L}^\prime \cong \PP^3$ which contains $\mathcal{L}$ and intersects the Veronese variety $v_{3,2}(\PP^3) \subset \PP^9$ in (at least) four independent points.
	Obviously, by dimensional reasons, $\mathcal{L}^\prime = \mathcal{L}$.
	\begin{proposition}\label{prop_dim4_rk4}
		The Weddle locus of a general linear system $\mathcal{L}$ of rank~$4$ splits in the union of~$4$ planes.
	\end{proposition}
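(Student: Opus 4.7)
The plan is to mimic the argument of Proposition 3.2 (the rank~$3$ case in $\PP^2$): exploit the fact that a generic rank~$r$ linear system, when its projective dimension equals $r-1$, is actually spanned by $r$ points of the Veronese variety, and then normalize those points via a projective change of coordinates.

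First, I would unravel what rank~$4$ means here. Since $\dim\mathcal{L}=3$ as a projective subspace of $\PP(\operatorname{S}^2 V)\cong\PP^9$, the condition $\operatorname{rk}\mathcal{L}=4$ together with the generality hypothesis forces $\mathcal{L}$ to coincide with a $\PP^3$ spanned by four points $[\ell_0^2],[\ell_1^2],[\ell_2^2],[\ell_3^2]$ on the Veronese variety $v_{3,2}(\PP^3)$. For generic $\mathcal{L}$, the four linear forms $\ell_0,\ldots,\ell_3\in V^\vee$ are linearly independent (this is an open condition, non-empty since squares of independent linear forms remain independent in $\operatorname{S}^2 V$).

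Second, after applying an element of $\operatorname{\PP GL}(V)$, I can assume $\ell_i=x_i$, so that $\mathcal{L}$ is generated by the four quadrics
\[
Q_k=x_k^2,\qquad k=0,1,2,3,
\]
whose representing symmetric matrices are $Q_k=E_{kk}$ (the matrix with a single $1$ in position $(k,k)$). This reduction is legitimate by the observation, recorded right after Remark~3.4, that the Weddle locus is equivariant under $\operatorname{\PP GL}(V)$, so splitting as a union of planes is preserved under coordinate changes.

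Third, I would compute the Weddle locus using the tensor-theoretic recipe of Remark~2.14. With $T_{ijk}=(Q_k)_{ij}=\delta_{ik}\delta_{jk}$, the contraction matrix has entries
\[
\big(\mathbf{T}^{\{2\}}\big)_{ik}=\sum_{j=0}^{3}x_j\,T_{ijk}=\sum_{j=0}^{3}x_j\,\delta_{ik}\delta_{jk}=x_k\,\delta_{ik},
\]
hence $\mathbf{T}^{\{2\}}=\operatorname{diag}(x_0,x_1,x_2,x_3)$. Its determinant is $x_0x_1x_2x_3$, so
\[
W(\mathcal{L})=\{x_0x_1x_2x_3=0\}
\]
is precisely the union of the four coordinate hyperplanes, as required.

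There is essentially no serious obstacle here: the only delicate point is the genericity step, where one needs to justify that a general linear system of rank~$4$ really is spanned by four squares of independent linear forms (rather than, say, being contained in a smaller secant locus). This follows from the fact that the fourth secant variety of $v_{3,2}(\PP^3)$ has dimension $4\cdot 3+3=15$, so a generic rank~$4$ linear system of projective dimension~$3$ arises from four linearly independent points of the Veronese, exactly as in the $n=2$ argument. Once this is granted, the computation above is immediate.
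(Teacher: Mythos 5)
Your argument is correct and follows essentially the same route as the paper: identify $\mathcal{L}$ with the span of four points $\ell_i^2$ of the Veronese variety, normalize to $\ell_i=x_i$ by a projective change of coordinates, and observe that the Weddle matrix becomes $\operatorname{diag}(x_0,x_1,x_2,x_3)$, so $W(\mathcal{L})=\{x_0x_1x_2x_3=0\}$. Your extra remark justifying that the four linear forms are independent for generic $\mathcal{L}$ is a point the paper leaves implicit, and is a welcome addition.
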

	\begin{proof}
		The proof is analogous to that of Proposition~\ref{prop_dim3_rk3}.
		Indeed, by a change of coordinates, it is always possible to assume that the four intersection points are $x_0^2, x_1^2, x_2^2, x_3^2$.
		In this way, we proceed similarly as in Example~\ref{ex_bpf_linear_sys_singular_weddle} since the Weddle matrix of $\mathcal{L}$ is
		\[
		\begin{bmatrix}
			x_0 & 0 & 0 & 0\\
			0 & x_1 & 0 & 0\\
			0 & 0 & x_2 & 0\\
			0 & 0 & 0 & x_3
		\end{bmatrix}
		\]
		hence $W(\mathcal{L})$ is defined by the equation $ x_0 x_1 x_2 x_3 = 0$.
	\end{proof}
	
	\begin{remark}
		As in the case of $\PP^2$, see Remark~\ref{rem_converse_rank3_3lines}, there are~$4$-dimensional linear systems of quadrics in $\PP^3$ with rank greater than~$4$ whose Weddle locus splits in the union of~$4$ planes.
	\end{remark}

\subsubsection*{Rank $5$ case}
	Let $\mathcal{L}$ be a general linear system of rank~$5$.
	By definition, there exists a projective space $\mathcal{L}^\prime \cong \PP^4$ which contains $\mathcal{L}$ and meets the Veronese variety $v_{3,2}(\PP^3)$ in (at least) five independent points.
	We prove the second main result of this section.
	
	\begin{theorem}\label{thm_dim4_rk5}
		Let $\mathcal{L}$ be a general linear system of rank~$5$.
		{Then, the Weddle locus of $\mathcal{L}$ has~$10$ singular points.}
	\end{theorem}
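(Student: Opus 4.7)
The plan is to exploit the rank-$5$ hypothesis to put $\mathcal{L}$ inside a decomposition $\mathcal{L} \subset \mathcal{L}' := \langle \ell_1^2, \dots, \ell_5^2 \rangle$, and to reduce the singularity analysis of the Weddle quartic to a combinatorial question about the five hyperplanes $H_i := \{\ell_i = 0\} \subset \PP^3$. For a general $\mathcal{L}$ of rank $5$ I may assume the linear forms $\ell_i$ are in general position in $V^{\vee}$, so that the coefficient vectors $v_i \in \C^4$ satisfy a unique linear relation $\sum_i c_i v_i = 0$ with every $c_i \neq 0$; moreover $\mathcal{L}$ is a general hyperplane in $\mathcal{L}'$, avoiding the five points $[\ell_i^2]$, the ten lines $\langle \ell_j^2, \ell_k^2 \rangle$, and the ten planes $\langle \ell_j^2, \ell_k^2, \ell_l^2 \rangle$.

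Since $W(\mathcal{L})$ is cut out by the determinant of the $4 \times 4$ matrix $\mathbf{W}_{\mathcal{L}}$ of linear forms, a point $P \in \PP^3$ is singular on $W(\mathcal{L})$ if and only if $\mathbf{W}_{\mathcal{L}}|_P$ has rank at most $2$; equivalently, the subspace $S_P(\mathcal{L}) := \{ Q \in \mathcal{L} : \nabla Q(P) = 0 \}$ has affine dimension at least $2$. For $Q = \sum_i \lambda_i \ell_i^2 \in \mathcal{L}'$ one computes $\nabla Q(P) = 2 \sum_i \lambda_i \ell_i(P) v_i$, so $Q$ lies in $S_P(\mathcal{L}')$ if and only if the vector $(\lambda_i \ell_i(P))_{i=1}^5$ is proportional to $(c_1, \dots, c_5)$. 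A case analysis on the number $r$ of indices with $\ell_i(P) = 0$ then yields $\dim S_P(\mathcal{L}') = 1$ for $r \in \{0, 1\}$, $\dim S_P(\mathcal{L}') = 2$ for $r = 2$ with $S_P(\mathcal{L}') = \langle \ell_j^2, \ell_k^2 \rangle$, and $\dim S_P(\mathcal{L}') = 3$ for $r = 3$ with $S_P(\mathcal{L}') = \langle \ell_j^2, \ell_k^2, \ell_l^2 \rangle$; the case $r \geq 4$ does not occur, since four general hyperplanes in $\PP^3$ have empty intersection.

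Passing from $\mathcal{L}'$ to its codimension-$1$ subspace $\mathcal{L}$, the dimension of $S_P$ drops by exactly one unless $S_P(\mathcal{L}') \subseteq \mathcal{L}$, which under the above generality assumptions does not happen for $r \leq 3$. Therefore $\dim S_P(\mathcal{L}) \geq 2$ if and only if $r = 3$, so the singular points of $W(\mathcal{L})$ are precisely the triple intersections $H_j \cap H_k \cap H_l$, of which there are $\binom{5}{3} = 10$, all distinct by general position of the $\ell_i$. The main technical obstacle will be justifying the generality hypotheses on the decomposition: one must check that for a truly generic rank-$5$ linear system the decomposition $\mathcal{L} \subset \langle \ell_1^2, \dots, \ell_5^2 \rangle$ can be chosen so that the $\ell_i$ are in general position and $\mathcal{L}$ itself is generic in $\mathcal{L}'$, so that all the relevant avoidance conditions hold simultaneously. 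Each is an open condition on the parameter space of rank-$5$ systems, so a short dimension count should suffice.
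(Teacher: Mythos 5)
Your reduction of the singularity question to the combinatorics of the five hyperplanes $H_i=\{\ell_i=0\}$ is a genuinely different and more conceptual route than the paper's, which instead normalizes the five double linear forms to $x_0^2,\dots,x_3^2,(x_0+x_1+x_2+x_3)^2$, expands the determinant of the Weddle matrix explicitly (with computer-algebra assistance), and exhibits the ten points by direct differentiation. Your ten triple intersections $H_j\cap H_k\cap H_l$ are exactly the ten points listed in the paper's Step~4, and your case analysis on $r=\#\{i:\ell_i(P)=0\}$ correctly shows that $\operatorname{rank}\mathbf{W}_{\mathcal{L}}|_P\le 2$ precisely at those points. Since corank $\ge 2$ forces all $3\times 3$ cofactors, hence all partial derivatives of $\det\mathbf{W}_{\mathcal{L}}$, to vanish, this cleanly gives the lower bound of ten singular points with no explicit determinant expansion, and it explains \emph{why} those ten points appear.

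The gap is in your opening equivalence: a point of a determinantal hypersurface $\{\det M=0\}$ is singular \emph{if} the matrix has corank $\ge 2$ there, but the converse is false in general. At a corank-one point $P$ with $\ker \mathbf{W}_{\mathcal{L}}|_P=\langle\lambda^0\rangle$ and left kernel $\langle w\rangle$, one has $\operatorname{adj}(\mathbf{W}_{\mathcal{L}})|_P=c\,\lambda^0 w^t$, so $\partial_s\det\mathbf{W}_{\mathcal{L}}|_P=c\,w^t(\partial_s\mathbf{W}_{\mathcal{L}})\lambda^0$, and this can vanish for all $s$ without the corank jumping; for instance $M=\left(\begin{smallmatrix}x_0&x_1\\ x_2&0\end{smallmatrix}\right)$ cuts out the singular conic $x_1x_2=0$, whose singular point $[1:0:0]$ is a corank-one point of $M$. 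So your argument proves ``at least ten'' but not ``exactly ten'': you must still exclude singular points of $W(\mathcal{L})$ at which $\mathbf{W}_{\mathcal{L}}$ has corank one. The paper closes this by producing one explicit rank-five system whose singular scheme has dimension $0$ and degree $10$ and then invoking semicontinuity; alternatively you would need to show that for general $\mathcal{L}$ the condition $w^t(\partial_s\mathbf{W}_{\mathcal{L}})\lambda^0=0$ for all $s$ has no solutions along the corank-one locus, which is an additional argument you have not supplied. By contrast, the generality-of-decomposition issues you flag at the end are routine and not the real obstacle.
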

	\begin{proof}
		Given that the linear system $\mathcal{L}$ is general, the five points mentioned above are in general position.
		Thus, after changing coordinates, we can assume that they are
		\[
		x_0^2, \ x_1^2, \ x_2^2, \ x_3^2, \ (x_0 + x_1 + x_2 + x_3)^2 .
		\]
		
		\textbf{Step 1.}
		First, we explicitly compute the Weddle matrix $\mathbf{W}_{\mathcal{L}}$.
		Suppose that, for $0 \le k \le 3$, $\mathcal{L}$ is generated by
		\begin{equation*}
			Q_k = \alpha_k x_0^2 + \beta_k x_1^2 + \gamma_k x_2^2 + \delta_k x_3^2 + \epsilon_k (x_0 + x_1 + x_2 + x_3)^2
		\end{equation*}
		whose associated matrix, by assuming that $\epsilon_k = 1$, are
		\begin{equation*}
			\begin{bmatrix}
				1 + \alpha_0 	& 1        	& 1       	& 1 \\
				1       	& 1 + \beta_0 & 1		 	& 1 \\
				1       	&  1  		& 1+ \gamma_0	& 1 \\
				1       	&  1        & 1 		& 1 + \delta_0
			\end{bmatrix},
			\dots,
			\begin{bmatrix}
				1 + \alpha_3 	& 1        	& 1       	& 1 \\
				1       	& 1 + \beta_3 & 1		 	& 1 \\
				1       	&  1  		& 1+ \gamma_3	& 1 \\
				1       	&  1        & 1 		& 1 + \delta_3
			\end{bmatrix} .
		\end{equation*}
		By adapting Remark~\ref{rem_weddle_matrix_conics} to dimension~$3$, we can immediately deduce the explicit form of the Weddle matrix $\mathbf{W}_{\mathcal{L}}$ of $\mathcal{L}$ as
		\begin{equation*}
			2 \begin{bmatrix}
				(1+\alpha_0)x_0 + x_1 + x_2 + x_3 & \dots & (1+\alpha_3)x_0 + x_1 + x_2 + x_3\\
				x_0 + (1+\beta_0)x_1 + x_2 + x_3  & \dots & x_0 + (1+\beta_3)x_1 + x_2 + x_3\\
				x_0 + x_1 + (1+\gamma_0)x_2 + x_3 & \dots & x_0 + x_1 + (1+\gamma_3)x_2 + x_3\\
				x_0 + x_1 + x_2 + (1+\delta_0)x_3 & \dots & x_0 + x_1 + x_2 + (1+\delta_3)x_3
			\end{bmatrix} .
		\end{equation*}
		Let $M = (M_{ij})$ be the matrix
		\begin{equation*}
			\begin{bmatrix}
			\alpha_0 & \alpha_1 & \alpha_2 & \alpha_3\\
			\beta_0 & \beta_1 & \beta_2 & \beta_3\\
			\gamma_0 & \gamma_1 & \gamma_2 & \gamma_3\\
			\delta_0 & \delta_1 & \delta_2 & \delta_3\\
			\end{bmatrix} .
		\end{equation*}
		Note that $M$ is a general matrix, as $\mathcal{L}$ is a general linear system.
		Therefore, for $0 \le i,j \le 3$, we can write
		\begin{equation*}
			\frac{1}{2} (\mathbf{W}_{\mathcal{L}})_{ij} = (x_0 + x_1 + x_2 + x_3) + x_i M_{ij}
		\end{equation*}
		and, more concisely, if we set $\xi := x_0 + x_1 + x_2 + x_3$ and
		\begin{equation*}
			\Xi := \begin{bmatrix}
				\xi & \xi & \xi & \xi \\
				\xi & \xi & \xi & \xi \\
				\xi & \xi & \xi & \xi \\
				\xi & \xi & \xi & \xi \\
			\end{bmatrix}, \ 
			D := \begin{bmatrix}
				x_0 & 0 & 0 & 0\\
				0 & x_1 & 0 & 0\\
				0 & 0 & x_2 & 0\\
				0 & 0 & 0 & x_3\\
			\end{bmatrix}
		\end{equation*}
		we write
		\begin{equation*}
			\frac{1}{2} \mathbf{W}_{\mathcal{L}} = \Xi + D M .
		\end{equation*}
		
		\textbf{Step 2.}
		Given two $m \times m$ matrices $A$ and $B$, recall that the determinant of $A+B$ can be expressed as the sum of the determinants of $2^m$ matrices, obtained by replacing, for each subset of columns, the corresponding columns of $A$ with those of $B$.
		
		Since we want to compute $\det (\Xi + D M)$, and all columns of $\Xi$ are identical, only five determinants contribute.
		These are: the determinant of $D M$ itself, and the four determinants of the matrices obtained by replacing, for each $1 \le \ell \le 4$, the $\ell$-th column of $D M$ with the corresponding column of $\Xi$.
		From this, it is easy to deduce that the determinant of $\frac{1}{2} \mathbf{W}_{\mathcal{L}}$ can be expressed as
		\[
		F := x_0 x_1 x_2 L_3 + x_0 x_1 L_2 x_3 + x_0 L_1 x_2 x_3 + L_0 x_1 x_2 x_3
		\]
		for suitable linear forms $L_0, L_1, L_2, L_3$.
		In what follows, we study the singularities of the Weddle locus of $\mathcal{L}$, which is $\{ F = 0\}$.
		
		\textbf{Step 3.}
		For $0\le s \le 3$, it is useful to write
		\[
		L_s = L_s^0 x_0 + L_s^1 x_1 + L_s^2 x_2 + L_s^3 x_3 ,
		\]
		where each $L_s^i \in \C$ depends on the entries of $M$.
		By expanding $F$, we can write
		\begin{equation}\label{eq_weddle_rank_5_expanded}
			\begin{aligned}
			F &= \left( \sum_{s = 0}^3 L_s^s \right) x_0 x_1 x_2 x_3 \\
			&+ \sum_{0 \le i < j < k \le 3} \sum_{ \substack{0 \le s \le 3 \\ s \notin \{i,j,k\}}} L_s^i x_i^2 x_j x_k + L_s^j x_i x_j^2 x_k + L_s^k x_i x_j x_k^2 .
		\end{aligned}
		\end{equation}
		
		Now, for each $0 \le s \le 3$, denote by $\mu_s$ the determinant of the matrix obtained by replacing the $(s+1)$-th row of $M$ with $(1,1,1,1)$.
		For instance,
		\begin{equation*}
			\mu_1 = \det \begin{bmatrix}
				\alpha_0 & \alpha_1 & \alpha_2 & \alpha_3\\
				1 & 1 & 1 & 1\\
				\gamma_0 & \gamma_1 & \gamma_2 & \gamma_3\\
				\delta_0 & \delta_1 & \delta_2 & \delta_3\\
			\end{bmatrix} .
		\end{equation*}
		
		Observe that the coefficients of $\det (\Xi + D M)$ are homogeneous polynomials in the entries of $M$.
		By implementing some computations in \texttt{CoCalc} (see \cite{CoCalc}), where we compared each monomial in~\eqref{eq_weddle_rank_5_expanded} with the corresponding monomial of $\det (\Xi + D M)$, we deduce the following equalities
		\[
		\sum_{s = 0}^3 L_s^s = \det M + \sum_{s = 0}^3 \mu_s
		\]
		and, for each $ 0 \le i < j < k \le 3$, $0 \le s \le 3$, $s \notin \{i,j,k\}$,
		\[
		L_s^i = L_s^j = L_s^k = \mu_s .
		\]
		Therefore, Equation~\eqref{eq_weddle_rank_5_expanded} becomes
		\begin{equation*}
			\begin{aligned}
				F &= \left(\det M + \sum_{s = 0}^3 \mu_s\right) x_0 x_1 x_2 x_3 \\
				&+ \sum_{0 \le i < j < k \le 3} \sum_{ \substack{0 \le s \le 3 \\ s \notin \{i,j,k\}}} \mu_s (x_i^2 x_j x_k + x_i x_j^2 x_k + x_i x_j x_k^2) .
			\end{aligned}
		\end{equation*}
		
		\textbf{Step 4.}
		Now, for each $0 \le \ell \le 3$, we differentiate $F$ with respect to $x_\ell$.
		We get
		\begin{equation*}
			\begin{aligned}
				\frac{\partial F}{\partial x_\ell} &= \left(\det M + \sum_{s = 0}^3 \mu_s\right) \prod_{i \neq \ell} x_i \\
				&+ \sum_{\substack{0 \le j < k \le 3 \\ \ell \notin \{j,k\}}} \sum_{ \substack{0 \le s \le 3 \\ s \notin \{j,k,\ell\}}} \mu_s (2x_\ell x_j x_k + x_j^2 x_k + x_j x_k^2) .
			\end{aligned}
		\end{equation*}
		From this explicit expression, we see that the ten points
		\begin{multline*}
			[1:0:0:0], [0:1:0:0], [0:0:1:0], [0:0:0:1],\\
			[1:-1:0:0], [1:0:-1:0], [1:0:0:-1],\\
			[0:1:-1:0], [0:1:0:-1], [0:0:1:-1]
		\end{multline*}
		annihilate the gradient of $F$.
		Thus, the Weddle locus of $\mathcal{L}$ has at least these ten singular points.
		
		\textbf{Step 5.}
		Finally, we observe that for a suitable choice of a general linear system of rank~$5$, equivalently, for a suitable choice of the matrix $M$, the corresponding Weddle locus has exactly the ten singular points listed above.
		For instance, one may take
		\[
		M = \begin{bmatrix}
			 1 & 0 & 1 & 1 \\
			 1 & 2 & 0 & 1 \\
			 0 & 1 & -1 & 1 \\
			 1 & 0 & 1 & 0 \\
		\end{bmatrix} .
		\]
		In this case, the quartic polynomial $F$ becomes
		\begin{align*}
		x_0 x_1 x_2 x_3 &- x_0^2 x_1 x_2 - x_0 x_1^2 x_2 - x_0 x_1 x_2^2 - x_0^2 x_1 x_3 - x_0 x_1^2 x_3 - x_0 x_1 x_3^2 \\
		&+ x_0^2 x_2 x_3 + x_0 x_2^2 x_3 + x_0 x_2 x_3^2 + x_1^2 x_2 x_3 + x_1 x_2^2 x_3 + x_1 x_2 x_3^2 .
		\end{align*}
		By using \texttt{Macaulay2} (see \cite{M2}) we computed the Hilbert polynomial of the ideal generated by the partial derivatives of $F$.
		In this way, we saw that the singular locus of the associated Weddle locus has projective dimension~$0$ and degree~$10$.
		This shows that the bound is optimal and the proof is over.
	\end{proof}
	
	The results established in this section allows us to derive information on the rank of a linear system of quadrics, starting from the geometry of its associated Weddle locus.
	\begin{corollary}\label{cor_sing_wed_less_10}
		Let $\mathcal{L}$ be a linear system of quadrics in $\PP^3$.
		{If the Weddle locus of $\mathcal{L}$ contains less than~$10$ singular points (\textit{e.g.}, if $W(\mathcal{L})$ is smooth), then the rank of $\mathcal{L}$ is~$\ge 6$.}
	\end{corollary}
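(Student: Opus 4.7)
The plan is to prove the contrapositive: if $\operatorname{rk}\mathcal{L}\le 5$, then $W(\mathcal{L})$ has at least $10$ singular points. Since $\dim\mathcal{L}=n+1=4$, the chain of inequalities~\eqref{eq_chain_dimensions_rank} already gives $\operatorname{rk}\mathcal{L}\ge 4$, so only the two cases $\operatorname{rk}\mathcal{L}=4$ and $\operatorname{rk}\mathcal{L}=5$ need to be excluded.

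The rank $4$ case will be handled by a direct appeal to Proposition~\ref{prop_dim4_rk4}: after a suitable change of coordinates, $W(\mathcal{L})=\{x_0 x_1 x_2 x_3 = 0\}$, whose singular locus contains the six coordinate lines $\{x_i=x_j=0\}$ and is therefore infinite. In particular it has far more than $10$ singular points.

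The rank $5$ case is the substantive one, and the workhorse is Theorem~\ref{thm_dim4_rk5}: for a general rank $5$ system the Weddle locus has exactly the $10$ singular points explicitly listed in Step 4 of its proof. Crucially, that list depends only on the normalization $x_0^2,\dots,x_3^2,(x_0+x_1+x_2+x_3)^2$ of the five Veronese intersection points, not on the entries of the parameter matrix $M$; hence the lower bound $\ge 10$ holds for every rank $5$ system whose five intersection points with $v_{3,2}(\PP^3)$ sit in this general configuration. The main obstacle is then covering the remaining (non-generic) rank $5$ systems, where the normalization is unavailable. My plan is to resolve it by a degeneration argument: embed the given $\mathcal{L}$ in a one-parameter flat family of $4$-dimensional linear systems whose generic fibre is a generic rank $5$ system, and invoke upper semicontinuity of the length of the singular subscheme of the Weddle locus to propagate the bound $\ge 10$ to $\mathcal{L}$ itself. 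Combined with the rank $4$ analysis, this contradicts the hypothesis and forces $\operatorname{rk}\mathcal{L}\ge 6$.
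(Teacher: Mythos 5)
Your handling of the rank~$4$ case and of the generic rank~$5$ case is exactly how the paper intends the corollary to be read: the paper gives no separate proof, treating the statement as an immediate consequence of Proposition~\ref{prop_dim4_rk4} and Theorem~\ref{thm_dim4_rk5}. Your observation that the ten singular points produced in Step~4 of the proof of Theorem~\ref{thm_dim4_rk5} depend only on the normalization of the five Veronese points and not on the matrix $M$ is correct and is the real content of the deduction: they are the $\binom{5}{3}=10$ points where three of the five linear forms $x_0,\dots,x_3,x_0+x_1+x_2+x_3$ vanish, so the lower bound holds for every rank~$5$ system whose five summands $\ell_i^2$ have the $\ell_i$ in linearly general position.

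The gap is in your final degeneration step. Upper semicontinuity in a flat family controls the \emph{length} of the singular subscheme cut out by the Jacobian ideal, not the \emph{number of distinct points} in its support. When a generic rank~$5$ system specializes to a non-generic one, the ten singular points can collide; the limit then has singular subscheme of degree at least~$10$ but possibly supported at fewer than ten points --- which is precisely the configuration the corollary's hypothesis is meant to detect. So semicontinuity yields ``degree of the singular scheme $\ge 10$'', not ``at least ten singular points'', and the contrapositive is not closed for special rank~$5$ systems. To be fair, this is not a defect you introduced: the paper itself only establishes the bound for systems with the five forms in general position, so the corollary as literally stated for an arbitrary $\mathcal{L}$ is only justified under that implicit genericity (or after replacing the count of singular points by the degree of the singular scheme). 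The same caveat applies to your use of Proposition~\ref{prop_dim4_rk4} in the rank~$4$ case, although there it is harmless, since any quartic that is a product of linear forms (or an identically vanishing Weddle determinant) has positive-dimensional singular locus in any event.
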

	\begin{example}
		Consider the linear system in $\PP^3$, taken ``randomly", generated by the quadrics
		\begin{equation*}
			\resizebox{\textwidth}{!}{$
			\begin{bmatrix}
				0 & -\frac{3}{2} & 0 & 2 \\
				-\frac{3}{2} & 1 & -1 & \frac{1}{2} \\
				0 & -1 & -5 & 1 \\
				2 & \frac{1}{2} & 1 & 1 \\
			\end{bmatrix} ,
			\begin{bmatrix}
				1 & -\frac{1}{2} & -\frac{1}{2} & \frac{1}{2} \\
				-\frac{1}{2} & 0 & -\frac{1}{2} & 6 \\
				-\frac{1}{2} & -\frac{1}{2} & 1 & 4 \\
				\frac{1}{2} & 6 & 4 & 25 \\
			\end{bmatrix} ,
			\begin{bmatrix}
				-7 & 1 & -\frac{13}{2} & 0 \\
				1 & -1 & -\frac{3}{2} & \frac{1}{2} \\
				-\frac{13}{2} & -\frac{3}{2} & 101 & \frac{3}{2} \\
				0 & \frac{1}{2} & \frac{3}{2} & 4 \\
			\end{bmatrix} ,
			\begin{bmatrix}
				-4 & -\frac{9}{2} & \frac{13}{2} & 2 \\
				-\frac{9}{2} & 0 & 0 & -\frac{15}{2} \\
				\frac{13}{2} & 0 & -1 & -\frac{1}{2} \\
				2 & -\frac{15}{2} & -\frac{1}{2} & 1 \\
			\end{bmatrix}
			$}
		\end{equation*}
		In this case, by Remark~\ref{prop_weddle_definition_alternative}, the associated Weddle matrix is 
		\begin{equation*}
			\resizebox{\textwidth}{!}{$
				\begin{bmatrix}
				-\frac{3}{2} x_1 + 2 x_3 & x_0 -\frac{1}{2} x_1 -\frac{1}{2} x_2 + \frac{1}{2} x_3 & -7 x_0 + x_1  -\frac{13}{2} x_2 & -4 x_0 -\frac{9}{2} x_1 + \frac{13}{2} x_2 + 2 x_3\\
				-\frac{3}{2} x_0 + x_1 - x_2 + \frac{1}{2} x_3 & -\frac{1}{2} x_0  -\frac{1}{2} x_2 + 6 x_3 & x_0  -x_1  -\frac{3}{2}x_2 + \frac{1}{2}x_3 & -\frac{9}{2} x_0 -\frac{15}{2}x_3\\
				- x_1 -5 x_2 + x_3 & -\frac{1}{2} x_0 -\frac{1}{2} x_1 +x_2 +4 x_3 & -\frac{13}{2} x_0 -\frac{3}{2} x_1 +101 x_2 + \frac{3}{2}x_3 & \frac{13}{2} x_0  -x_2  -\frac{1}{2}x_3\\
				2 x_0 +\frac{1}{2} x_1 + x_2 + x_3 & \frac{1}{2} x_0 +6x_1 +4x_2 +25 x_3 & \frac{1}{2} x_1 + \frac{3}{2} x_2 + 4 x_3 & 2 x_0 -\frac{15}{2}x_1  -\frac{1}{2} x_2 + x_3
			\end{bmatrix}
			$}
		\end{equation*}
		and denote by $F$ its determinant.
		We used \texttt{Macaulay2} (see \cite{M2}) to compute the affine dimension of the ideal generated by the partial derivatives of $F$.
		Given that such dimension is~$0$, it follows that the Weddle locus $\{F=0\}$ is a smooth quartic.
	\end{example}
	
	\begin{remark}
		Coming back to the original Weddle problem (see \cite{Wed1850}), consider the~$4$-dimensional linear system of quadrics determined by~$6$ general points in $\PP^3$.
		By Theorem~\ref{prop_base_point_is_singular}, the associated Weddle locus has~$6$ singular points.
		There are examples of linear systems $\mathcal{L}$ of this type, such that these~$6$ points are the only singular points of $W(\mathcal{L})$ (for instance, see Example~\ref{ex_weddle_6_points} below).
		Hence, by Corollary~\ref{cor_sing_wed_less_10}, the rank of $\mathcal{L}$ is greater or equal to~$6$.
	\end{remark}
	
	\begin{example}\label{ex_weddle_6_points}
		Consider the linear system $\mathcal{L}$ determined by the~$6$ points
		\begin{align*}
			[1:0:0:0], [0:1:0:0], [0:0:1:0], [0:0:0:1], [1:1:1:1], [2:-1:5:-7]
		\end{align*}
		of $\PP^3$.
		We used \texttt{CoCalc} (see \cite{CoCalc}) to compute the affine dimension and the Hilbert polynomial of the ideal generated by the partial derivatives of a defining polynomial for $W(\mathcal{L})$.
		Since the affine dimension of this ideal is~$1$ and its Hilbert polynomial is~$6$, we conclude that $\operatorname{rk} \mathcal{L} \ge 6$.
	\end{example}

\section{Weddle loci of cyclic-symmetric tensors}\label{sect_weddle_semisym_tensors}
	In this last section, we go back to the case of linear systems of quadrics in $\PP(V)$ whose associated tensors are elements in ${\operatorname{N}_1}V$, \textit{i.e.}, cyclic-symmetric tensors.
	Since such linear systems are quite special, one may expect a special behavior for their Weddle loci.
	
	Set $V \cong \C^{n+1}$, and fix $\mathcal{L} \subset \PP({\operatorname{S}}^2 V)$ a linear system with $\dim \mathcal{L} = n+1$.
	Suppose that $\mathcal{L}$ is generated by the quadrics $Q_0,\dots,Q_n$.
	By a slight abuse of notation, for~$0 \le k \le n$, we identify the polynomial expression of the quadric $Q_k $ in the coordinates $(x_0,\dots,x_n)$ with its matrix representation by using the same symbol, namely $Q_k$.
	
	We prove the following fundamental property for the generators of linear systems associated to cyclic-symmetric tensors.
	\begin{lemma}\label{lem_relation_quadric_semisym_tensor}
		Let $\mathcal{L}$ be a linear system whose associated tensor $T$ is an element of ${\operatorname{N}_1}V$.
		Then, in the homogeneous coordinates $[x_0:\dots:x_n]$ of $\PP(V)$ the generators $Q_0,\dots,Q_n$ of $\mathcal{L}$ satisfy the relation
		\begin{equation*}
			x_0 Q_0 + \dots + x_n Q_n = 0 .
		\end{equation*}
	\end{lemma}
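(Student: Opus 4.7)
The plan is to translate the claim into a statement about the tensor $T$ and then exploit the defining cyclic relation of $\operatorname{N}_1 V$ together with the full symmetry of the monomial $x_i x_j x_k$ under permutations of $i, j, k$.

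First I would write out the left-hand side in coordinates. Since $Q_k$ is the symmetric matrix with entries $(Q_k)_{ij} = T_{ijk}$, we have
\[
Q_k(x) = \sum_{0 \le i,j \le n} T_{ijk}\, x_i x_j,
\]
and hence
\[
\sum_{k=0}^{n} x_k Q_k(x) = \sum_{0 \le i,j,k \le n} T_{ijk}\, x_i x_j x_k =: S(x).
\]
The goal reduces to showing that $S(x) = 0$ as a cubic form.

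Next I would relabel the summation indices to produce two alternative expressions for the same cubic form $S$. Since $x_i x_j x_k$ is symmetric under cyclic permutation of the indices $(i,j,k)$, renaming dummy variables gives
\[
S(x) = \sum_{i,j,k} T_{ijk}\, x_i x_j x_k = \sum_{i,j,k} T_{jki}\, x_i x_j x_k = \sum_{i,j,k} T_{kij}\, x_i x_j x_k.
\]
Adding the three identical expressions,
\[
3\, S(x) = \sum_{i,j,k} \bigl( T_{ijk} + T_{jki} + T_{kij} \bigr)\, x_i x_j x_k,
\]
and the defining cyclic relation of $\operatorname{N}_1 V$ forces each coefficient to vanish. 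Thus $S(x) = 0$, which is exactly the desired identity $\sum_k x_k Q_k = 0$.

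There is no real obstacle: the proof is a two-line manipulation once the tensor-translation is in place. The only point requiring a bit of care is to emphasize that the partial symmetry $T_{ijk} = T_{jik}$ (i.e. the fact that each $Q_k$ is a \emph{symmetric} matrix representing a quadric) is not even used in this computation — what is used is purely the cyclic condition, which is the extra constraint that the space $\operatorname{N}_1 V$ inherits from $\operatorname{N} V$ via Proposition~\ref{prop_imN=NV}.
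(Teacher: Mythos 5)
Your proof is correct, and it takes a genuinely different and shorter route than the paper. You reduce the claim to the identity $\sum_{i,j,k} T_{ijk}\, x_i x_j x_k = 0$ and kill it by cyclically relabelling the dummy indices and invoking the defining relation $T_{ijk}+T_{jki}+T_{kij}=0$; this is a clean two-line symmetrization argument, and your observation that the partial symmetry $T_{ijk}=T_{jik}$ plays no role is accurate --- the identity holds for any tensor in $\operatorname{N}V$, with the partial symmetry only needed so that the faces $T_k$ actually represent quadrics. The paper instead argues by induction on $n$: it peels off the last face and the last row/column of each face using the sub-tensor construction of Propositions~\ref{thm_construction_semisym_tensors_inverse} and~\ref{thm_construction_semisym_tensors}, writes the last face $T_n$ explicitly in terms of the entries $T_{snk}$, and verifies that $\sum_{k<n} x_k Q_k = -x_n Q_n$. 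That longer argument is not wasted effort in the paper's economy: the explicit description of the restrictions $q_k = Q_k|_{\{x_n=0\}}$ and their relation is reused directly in the proof of Theorem~\ref{thm_Jacobsthal_base_points}, whereas your argument, while a cleaner proof of the lemma as stated, would leave that later inductive set-up to be developed separately.
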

	\begin{proof}
		We proceed by induction on $n$.
		
		For $n=1$, we already mentioned in Example~\ref{ex_residue_tensor_dim2} that the matrices associated to $Q_0$ and $Q_1$ must have the form
		\[
		\begin{bmatrix}
			0 & a \\
			a & 2b
		\end{bmatrix}
		\text{ and }
		\begin{bmatrix}
			-2a & -b \\
			-b & 0
		\end{bmatrix}
		\]
		respectively.
		Therefore,
		\begin{align*}
			x_0 Q_0 &+ x_1 Q_1 \\
			&= x_0 (2a x_0 x_1 + 2b x_1^2) + x_1 (-2a x_0^2 - 2b x_0 x_1)  \\
			&=0
		\end{align*}
		and the equation is satisfied.
		
		Let $\mathcal{L}$ be associated to a tensor $T \in {\operatorname{N}_1}\C^{n+1}$, whose faces $T_0, \dots, T_n$ are the matrices associated to the generators $Q_0, \dots, Q_n$ of $\mathcal{L}$.
		For $0 \le k < n$, the matrix $T_k$ has the form
		\begin{equation*}
			\begin{bmatrix}
				& & & T_{0nk} \\
				& S_k& & \vdots \\
				& & & \vdots \\
				T_{0nk} & \dots & \dots & 2 T_{nnk}
			\end{bmatrix}
		\end{equation*}
		for some symmetric matrix $S_k$, whose associated quadric is denoted by $q_k$.
		Therefore, the equation
		\[
		q_k + \sum_{s = 0}^{n} T_{snk} x_s x_n = 0
		\]
		defines the quadric $Q_k$.
		
		By Proposition~\ref{thm_construction_semisym_tensors_inverse}, the matrices $S_0, \dots, S_{n-1}$ are the faces of a cyclic-symmetric tensor $S \in {\operatorname{N}_1}\C^n$.
		By the induction step, the relation
		\[
		x_0 q_0 + \dots + x_{n-1} q_{n-1} = 0
		\]
		holds.
		
		As already seen in the proof of Proposition~\ref{thm_construction_semisym_tensors} (specifically, due to the two symmetries that cyclic-symmetric tensors satisfy), the last face of $T$ is of the form
		\begin{equation*}
			\begin{bmatrix}
				- 2 T_{0n0} & - (T_{0 n 1} + T_{1 n 0}) & \dots & - T_{nn0} \\
				- (T_{0 n 1} + T_{1 n 0}) & - 2 T_{1n1} & \dots & - T_{nn1} \\
				\vdots & \vdots & \ddots & \vdots \\
				- T_{nn0} & - T_{nn1} & \dots & 0
			\end{bmatrix}
		\end{equation*}
		hence, the equation
		\[
		-\sum_{k=0}^{n-1} T_{knk}x_k^2 - \sum_{j=0}^{n-1} T_{nnj}x_j x_n - \sum_{0 \le \ell < m < n} (T_{\ell nm} + T_{mn\ell}) x_\ell x_m = 0
		\]
		defines the corresponding quadric $Q_n$.
		
		We compute, 
		\begin{align*}
			&\sum_{k=0}^{n-1} x_k Q_k \\
			&=  \sum_{k=0}^{n-1} x_k \left(q_k + \sum_{s = 0}^{n} T_{snk} x_s x_n\right) \\
			&=  \cancelto{\text{induction}}{\sum_{k=0}^{n-1} x_k q_k} + x_n \sum_{s = 0}^{n} x_s \sum_{k=0}^{n-1} T_{snk} x_k \\
			&=  x_n \left( \underbrace{\sum_{k=0}^{n-1} T_{knk}x_k^2}_{s=k<n} + \underbrace{\sum_{k=0}^{n-1} T_{nnk} x_k  x_n}_{s=n} + \underbrace{\sum_{0 \le s < k < n} (T_{s n k} + T_{kns}) x_s x_k}_{\text{otherwise}} \right) \\
			&= -x_n Q_n
		\end{align*}
		which proves the claimed equation.
	\end{proof}
	
	From now on, let $\mathcal{L}$ be a linear system whose associated tensor is a \emph{general} element of the linear space ${\operatorname{N}_1}V$.
	We want to study its Weddle locus $W(\mathcal{L})$.
	\begin{remark}\label{rem_base_point_semisym_dim_2}
		Suppose that $\dim V = 2$.
		By Example~\ref{ex_residue_tensor_dim2}, we have that the faces $Q_0$ and $Q_1$ of a cyclic-symmetric tensor $T \in {\operatorname{N}_1}\C^2$ give the equations
		\[
		a x_0 x_1 + b x_1^2 = 0 \ \ \text{and} \ \ a x_0^2 + b x_0 x_1 = 0
		\]
		respectively.
		Hence, $[-b,a]$ is a base point for the linear system associated to $T$.
	\end{remark}
	
	One can show (see also Corollary~\ref{cor_lin_sys_conics_semisym} below) that the linear system of conics associated to a general tensor in ${\operatorname{N}_1}\C^3$ has~$3$ base points.
	By Theorem~\ref{prop_base_point_is_singular}, this implies that its Weddle locus has~$3$ singular points.
	
	Then, Remark \ref{rem_base_point_semisym_dim_2} suggests that the Weddle locus of systems associated with cyclic-symmetric tensors is somehow special.
	In general, we prove that these Weddle loci always have singular points, and the number of the singular points is bounded from below.
	Before providing the full statement, we give the following.
	\begin{definition}
		A non-negative integer is called a \emph{Jacobsthal number} if it belongs to the following sequence, defined by the recurrence relation
		\[
		J_n = \begin{cases}
			0 &\text{if } n=0 \\
			1 &\text{if } n=1 \\
			J_{n-1} + 2J_{n-2} &\text{if } n>1
		\end{cases}
		\]
	\end{definition}
	We refer to \cite{Hor96}, and references therein, for results regarding the Jacobsthal sequence.
	In particular, we are interested in the recursion formula for the $(n+1)$-th Jacobsthal number
	\begin{equation}\label{prop_recurrence_Jacobsthal}
		J_{n+1} = 2^n - J_n ,
	\end{equation}
	which can be easily verified by induction through the computation
	\begin{align*}
		J_{n+1} &= J_{n} + 2J_{n-1}\\
		&= 2^{n-1} - J_{n-1} + 2(2^{n-2} - J_{n-2})\\
		&= 2^n - (J_{n-1} + 2J_{n-2}) .
	\end{align*}
	Furthermore, Jacobsthal numbers satisfy also the recursion formula
	\begin{equation*}
		J_{n+1} = 2 J_n + (-1)^n ,
	\end{equation*}
	and can be computed directly by means of the equation
	\[
	J_n = \frac{2^n - (-1)^n}{3}.
	\]
	See \cite{Hor96}, and references therein, for further details.

	The first Jacobsthal numbers are:
	\[
	0, 1, 1, 3, 5, 11, 21, 43, 85, 171, 341, 683, 1365, 2731,\dots
	\]
	and we immediately note that $J_2 = 1$ and $J_3 = 3$ are the number of base points of a linear system associated to a tensor in ${\operatorname{N}_1}\C^2$ and ${\operatorname{N}_1}\C^3$ respectively.
	
	We are now ready to prove the main result of this section.
	\begin{theorem}\label{thm_Jacobsthal_base_points}
		If $\dim V = n$, then a linear system $\mathcal{L}$ associated to a general cyclic-symmetric tensor in ${\operatorname{N}_1}V$ has $J_n$ base points, where $J_n$ is the $n$-th {Jacobsthal number}.
		In particular, by Theorem~\ref{prop_base_point_is_singular}, its Weddle locus $W(\mathcal{L})$ has at least $J_n$ singular points.
	\end{theorem}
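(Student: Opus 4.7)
The plan is to proceed by induction on $n$, leveraging the sub-tensor construction of Proposition~\ref{thm_construction_semisym_tensors_inverse}, the syzygy of Lemma~\ref{lem_relation_quadric_semisym_tensor}, and the recursion $J_{n+1} = 2^n - J_n$ recorded in \eqref{prop_recurrence_Jacobsthal}. The base case $n=2$ is immediate from Remark~\ref{rem_base_point_semisym_dim_2}, which exhibits the unique base point $[-b:a]$ in $\PP^1$ for a general system associated to a tensor in ${\operatorname{N}_1} \C^2$, matching $J_2 = 1$.

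For the inductive step, I would fix a general $T \in {\operatorname{N}_1} \C^{n+1}$ with associated linear system $\mathcal{L}$ generated by quadrics $Q_0, \dots, Q_n$ in $\PP^n$, and set $H = \{x_n = 0\}$. The strategy is to split the base locus of $\mathcal{L}$ into points inside $H$ and points outside, and compute the two contributions separately. Outside $H$, Lemma~\ref{lem_relation_quadric_semisym_tensor} gives the identity $x_0 Q_0 + \cdots + x_n Q_n = 0$, so any $[x]$ with $x_n \neq 0$ annihilating $Q_0, \dots, Q_{n-1}$ automatically annihilates $Q_n$ as well. Hence the base points of $\mathcal{L}$ outside $H$ coincide with the common zeros of $Q_0, \dots, Q_{n-1}$ outside $H$. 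For a general $T$ these $n$ quadrics should form a regular sequence, whence Bezout provides a reduced $0$-dimensional scheme of length $2^n$ in $\PP^n$.

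To split this length-$2^n$ intersection between $H$ and $\PP^n \setminus H$, I would observe that $Q_k|_{x_n=0}$ is precisely the $k$-th face of the sub-tensor $S \in {\operatorname{N}_1} \C^{n}$ supplied by Proposition~\ref{thm_construction_semisym_tensors_inverse}, and that the restriction map $T \mapsto S$ is surjective, so a generic $T$ yields a generic $S$. Applying the inductive hypothesis to $S$, exactly $J_n$ of the $2^n$ intersection points lie in $H$, leaving $2^n - J_n = J_{n+1}$ outside. Each of these latter points is a base point of $\mathcal{L}$ by the syzygy. It remains to show that, conversely, none of the $J_n$ points in $H$ is a base point of $\mathcal{L}$, i.e., that $Q_n$ does not vanish at any of them. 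This is where the freedom described in Remark~\ref{rem_free} enters decisively: the entries in the last columns of the faces $T_0, \dots, T_{n-1}$ are free parameters, entirely independent of $S$, and they determine the upper-left $n \times n$ block of $T_n$, hence the quadric $Q_n|_{x_n=0}$, in an arbitrary manner. Thus $Q_n|_{x_n=0}$ can be regarded as a generic symmetric form in $x_0, \dots, x_{n-1}$, independent of the data defining the $J_n$ points, and its non-vanishing at these finitely many points is an open condition.

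The main obstacle is not the combinatorial counting but the verification of the two genericity conditions that underpin it: (i) that $Q_0, \dots, Q_{n-1}$ form a regular sequence in $\PP^n$ so that Bezout yields a reduced length-$2^n$ intersection, and (ii) that $Q_n|_{x_n=0}$ avoids the $J_n$ points prescribed by $S$. Both are Zariski-open conditions on ${\operatorname{N}_1} \C^{n+1}$, and the natural route is to exhibit a single explicit witness, built by lifting an inductively constructed $S$ via Remark~\ref{rem_free} with the free parameters tuned so that $Q_n|_{x_n=0}$ misses the known $J_n$ base points of $S$ and so that the intersection $V(Q_0) \cap \cdots \cap V(Q_{n-1})$ is transverse. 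Combining these verifications with the count above yields a total of $0 + J_{n+1} = J_{n+1}$ base points, completing the induction.
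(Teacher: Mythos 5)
Your proof follows essentially the same route as the paper's: induction on $n$ using the syzygy $x_0Q_0+\cdots+x_nQ_n=0$ from Lemma~\ref{lem_relation_quadric_semisym_tensor}, restriction to the hyperplane $\{x_n=0\}$ to apply the inductive hypothesis to the sub-tensor $S\in{\operatorname{N}_1}\C^{n}$, B\'ezout giving $2^n$ reduced intersection points of $Q_0,\dots,Q_{n-1}$, and the recursion $J_{n+1}=2^n-J_n$. If anything you are slightly more careful than the paper, since you explicitly check (via the free parameters of Remark~\ref{rem_free}) that $Q_n$ does not vanish at the $J_n$ points inside the hyperplane---a step the paper leaves implicit---while the paper secures the transversality via Bertini's theorem rather than an explicit witness, a purely cosmetic difference.
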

	\begin{proof}
		We prove this theorem by induction.
		
		The cases $n = 0, 1$ are completely trivial, while the case $n = 2$ is the content of Remark~\ref{rem_base_point_semisym_dim_2}.
		
		For general $n$, we want to prove that a linear system $\mathcal{L}$ associated to a general cyclic-symmetric tensor $T \in {\operatorname{N}_1}\C^{n+1}$ has $J_{n+1}$ base points.
		As usual, with a slight abuse of notation, for $0 \le k \le n$, we use the same symbol, namely $Q_k$, to denote both the quadric determined by the $k$-th face $T_k$ of $T$ and the defining polynomial of that quadric.
		In the previous Lemma~\ref{lem_relation_quadric_semisym_tensor}, we proved the relation
		\begin{equation}\label{eq_relation_cyclic_proof}
			x_0 Q_0 + \dots + x_n Q_n = 0
		\end{equation}
		that the generators of $\mathcal{L}$ satisfy.
		Therefore, the variety $x_n Q_n$ contains the base locus $\mathcal{B}_n$ of $Q_0, \dots, Q_{n-1}$.
		
		By the proof of Lemma~\ref{lem_relation_quadric_semisym_tensor}, observe that, for $0 \le k < n$, the quadric
		\[
		q_k := \left. {Q_k}\right|_{ \{ x_n=0 \} }
		\]
		is a generator of a linear system in $\{ x_n=0 \} \cong \PP^{n-1}$ associated to a cyclic-symmetric tensor $S$ in ${\operatorname{N}_1}\C^n$, and $q_k$ corresponds to the face $S_k$ of $S$.
		The tensor $S$ is general as it is the upper-left-front sub-tensor of $T$, which is general by induction.
		This means that on the hyperplane $\{x_n=0\}$, the intersection
		\[
		q_0 \cap \dots \cap q_{n-1}
		\]
		consists, by inductive hypothesis, of $J_{n}$ distinct points.
		
		Since, for each $k = 0,\dots,n-1$, $T_k$ is obtained from $S_k$ by freely adding the elements of the last column, see Remark~\ref{rem_free} and the proof of Proposition~\ref{thm_construction_semisym_tensors}, we have that, by construction, $Q_k$ is the \emph{generic} quadric whose restriction to the hyperplane $\{ x_n=0 \}$ is $q_k$.
		Hence, the base locus $\mathcal{B}_n$ is a $0$-dimensional reduced scheme by Bertini's theorem.
		Therefore, by Bézout's theorem, $\mathcal{B}_n$ consists of~$2^n$ distinct points.
		Since, as already mentioned, the hyperplane $\{x_n=0\}$ contains $J_{n}$ of these~$2^n$ points, it follows by Equation~\eqref{eq_relation_cyclic_proof} that the quadric $Q_n$ contains the remaining $2^n - J_n$ points of $Q_0 \cap \dots \cap Q_{n-1}$.
		We conclude by applying the recurrence formula~\eqref{prop_recurrence_Jacobsthal} for Jacobsthal numbers.
	\end{proof}
	
	\begin{corollary}\label{cor_lin_sys_conics_semisym}
		Suppose that $\mathcal{L}$ is a linear system of conics in $\PP^2$ associated to a general cyclic-symmetric tensor in ${\operatorname{N}_1}\C^3$.
		Then, $W(\mathcal{L})$ splits in the union of~$3$ lines (\textit{cf.} Section~\ref{subsect_n=2}).
	\end{corollary}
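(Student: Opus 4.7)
The plan is to derive the corollary directly from Theorem~\ref{thm_Jacobsthal_base_points} together with the general structure of plane cubics with many singularities. Applying Theorem~\ref{thm_Jacobsthal_base_points} with $n=3$ gives that $\mathcal{L}$ has exactly $J_3 = 3$ base points, and Theorem~\ref{prop_base_point_is_singular} then shows that each is a singular point of $W(\mathcal{L})$. Since the Weddle locus of a $3$-dimensional linear system of conics in $\PP^2$ is cut out by the determinant of a $3\times 3$ matrix of linear forms (Remark~\ref{rem_weddle_matrix_conics}), $W(\mathcal{L})$ is a plane cubic (or all of $\PP^2$, which does not occur for general choice of $T$).

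The next step is to verify that the three base points are non-collinear for a generic $T \in {\operatorname{N}_1}\C^3$. Non-collinearity of three points in $\PP^2$ is a Zariski-open condition on the parameter space, so it suffices to exhibit one explicit cyclic-symmetric tensor whose associated linear system of conics has three non-collinear base points. Using the explicit parametrization in Example~\ref{ex_residue_tensor_dim3} one can, for a well-chosen value of $(a,b,c,d,e,f,g,h)$, compute the base locus directly and check non-collinearity. This is the step that requires some care but is ultimately a finite calculation.

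Once non-collinearity is established, the conclusion is purely projective-geometric and already sketched in Remark~\ref{rem_converse_rank3_3lines}: if $P_1,P_2,P_3$ are three non-collinear singular points of a plane cubic $C$, then for any two of them the line $\ell_{ij}$ meets $C$ with local multiplicity at least $2$ at each $P_i,P_j$, so with total multiplicity $\ge 4 > 3$, forcing $\ell_{ij}\subset C$ by Bézout. Writing $C = \ell_{12}\cup C'$ with $C'$ a conic and noting that $P_3\notin \ell_{12}$, singularity of $C$ at $P_3$ is inherited by $C'$, so $C'$ is a degenerate conic, namely the union of two lines through $P_3$. Altogether $C$ is the union of three lines, which gives the desired conclusion $W(\mathcal{L}) = \ell_1\cup\ell_2\cup\ell_3$.

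The only genuine obstacle is the non-collinearity of the three base points: the corollary rests on this open condition being non-empty, and so a single explicit tensor in ${\operatorname{N}_1}\C^3$ realising three non-collinear base points is needed to close the argument. Everything else (cubic degree of $W(\mathcal{L})$, the three singularities, and the Bézout-based splitting into three lines) is immediate from the results already established.
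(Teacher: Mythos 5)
Your argument is essentially the paper's proof: Theorem~\ref{thm_Jacobsthal_base_points} gives $J_3=3$ base points, Theorem~\ref{prop_base_point_is_singular} makes them singular points of the cubic $W(\mathcal{L})$, and a plane cubic with three isolated singular points splits into three lines. The only divergence is the step you flag as a ``genuine obstacle'' and leave open, namely the non-collinearity of the three base points, which you propose to settle by exhibiting an explicit tensor. No computation is needed here: if the three base points were distinct and collinear, every conic of the net would meet the line through them in at least three points and hence contain it, so that line would lie in the base locus, contradicting the finiteness of the base locus (exactly $J_3=3$ points) already supplied by Theorem~\ref{thm_Jacobsthal_base_points}. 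Alternatively, one can bypass collinearity altogether: an irreducible plane cubic has at most one singular point, and a line plus an irreducible conic has at most two, so three isolated singular points already force a union of three lines. Either remark closes your proof without the explicit example, and the Bézout argument you quote from Remark~\ref{rem_converse_rank3_3lines} then finishes exactly as in the paper.
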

	\begin{proof}
		By Theorem~\ref{thm_Jacobsthal_base_points}, $\mathcal{L}$ has $J_3 = 3$ base points.
		Therefore, by Theorem~\ref{prop_base_point_is_singular}, the Weddle locus of $\mathcal{L}$ has~$3$ singular points.
		Being $W(\mathcal{L})$ a plane cubic with~$3$ singular points, it splits as a union of~$3$ lines.
	\end{proof}
	
	\begin{remark}
		Suppose that $\mathcal{L}$ is a general linear system of quadrics in $\PP^3$ and $T_{\mathcal{L}}$ its associated tensor.
		Then it is interesting to note the following:
		\begin{itemize}
			\item if $\operatorname{rk} \mathcal{L} = 5$, {then $W(\mathcal{L})$ has~$10$ singular points} (see Theorem~\ref{thm_dim4_rk5});
			\item if $T_{\mathcal{L}} \in {\operatorname{N}_1}\C^4$, then $W(\mathcal{L})$ has~$5$ singular points (see Theorem~\ref{thm_Jacobsthal_base_points});
			\item if $\mathcal{L}$ comes from the original Weddle problem (see \cite{Wed1850}), then $W(\mathcal{L})$ has~$6$ singular points.
		\end{itemize}
		It is therefore natural to ask whether some of the above implications are reversible.
	\end{remark}

\bibliographystyle{alphaurl} 
\bibliography{bib_chiantini_fagioli}

\end{document}